\numberwithin{equation}{section}
\newcounter{AbcT}
\newtheorem {Theorem}    {Theorem}[section]
\newtheorem {Subword}    {Subpath}
\newtheorem {Lemma}      [Theorem]    {Lemma}
\newtheorem {Corollary}   [Theorem] {Corollary}
\newtheorem {Proposition}[Theorem]    {Proposition}
\theoremstyle{remark}
\newcommand{{\rsp}}{respecting}
\newcommand{\Cay}{\mathit{Cay}}
\newcounter{DM@bibnum}
\newcommand {\SL} {{\mathrm{SL}}}
\newcommand {\N} {{\mathcal N}}
\newcommand {\Z} {{\mathbb Z}}
\def\NSL_2{{\mathcal N SL_2}}
\def\phi{\varphi}
\def\hbar{\bar h}
\newcommand{\iv}{^{-1}}
\begin{document}

\title{Divergence of Thompson groups}
\author{Gili Golan, Mark Sapir\thanks{The research of the first author was supported in part by a Fulbright grant and a post-doctoral scholarship of Bar-Ilan University, the research of the second author was supported in part by the NSF grant DMS-1500180.}}
\date{}

\maketitle

\section{Introduction and Preliminaries}

R. Thompson groups $F, T, V$, introduced by R. Thompson about 50 years ago, are very interesting from many points of view and there are numerous recent papers devoted to these groups (see \cite{BCST, BS, BBGGHMS, BCS,  J17, GS, G} and many others). Conferences devoted exclusively to these groups happen every few years. These are groups of certain homeomorphisms of the Cantor set, but can be best described as groups of tree diagrams (or \emph{braided pictures} \cite{GubSa}). Namely, an element $g$ of $V$ is a triple $(T_+, \sigma, T_-)$ where $T_+$ and $T_-$ are finite (full) rooted binary trees drawn on the plane with the same set $L$ of leaves (ordered from left to right) and $\sigma$ is a permutation of the set $L$. Every leaf $o$ of $T_+$ or $T_-$ corresponds to a unique path from the root to the leaf. For every caret of a plane full binary tree, we can label its left edge by 0 and the right edge by 1.
So every leaf $o$ of such a tree $T$ corresponds to a binary word $\ell_o(T)$ labeling the path from the root to $o$. We shall not distinguish between the path on the tree connecting the root with the leaf $o$ and the word $\ell_o(T)$. The path $\ell_o(T)$ will be called a \emph{branch} of $T$.

If we identify the Cantor set $C$ with the set of infinite binary words, then $(T_+, \sigma,T_-)$ corresponds to the homeomorphism of $C$ which for every leaf $o$ of $T_+$ maps each word $\ell_o(T_+)w$ to $\ell_{\sigma(o)}(T_-)w$ for every infinite binary word $w$. (It is easy to see that this is indeed a homeomorphism of $C$.) We say then that $\ell_o(T_+)\to \ell_{\sigma(o)}(T_-)$ is a \emph{branch} of the tree-diagram $(T_+,\sigma,T_-)$.

One can also identify a plane full binary tree $T$ with a decomposition of the unit interval $[0,1)$ where the endpoints of all subintervals in the decomposition are dyadic rationals.
Indeed, for a finite binary word $u$, we denote by $[u)$ the dyadic interval $[.u,.u1^{\mathbb N})$. Then if $T$ consists of the branches $u_1,\dots,u_n$, then $[u_1),\dots,[u_n)$ is a dyadic subdivision of the interval $[0,1)$. Then two full binary trees with the same number of leaves give two such decompositions with the same number of intervals, {and the element $g$ is represented by a map from $[0,1)$ to $[0,1)$ that maps each interval of the first decomposition linearly onto the corresponding (under $\sigma$) interval of the second decomposition.}

Two leaves $x,y$ of $T_+$ form a dipole if $x,y$ have a common parent in $T_+$ and $\sigma(x),\sigma(y)$ have a common parent in $T_-$. A dipole can be removed: we remove $x,y$ from the set of leaves of $T_+$ and $\sigma(x), \sigma(y)$ from the set of leaves of $T_-$, and re-identify the sets of leaves $L'$ of the two new trees $T'_+, T'_-$ in the natural way (from left to right). Then $\sigma$ gives us a permutation $\sigma'$ of the new set of leaves of the trees $T'_+, T'_-$. It is easy to see that the endomorphisms of the Cantor set $C$ corresponding to the  elements $(T_+, \sigma, T_-)$ and $(T'_+,\sigma',T'_-)$ are the same, so these two tree diagrams represent the same element of $V$, so these tree diagrams are \emph{equivalent}. In terms of branches, a dipole is a pair of branches $u0\to v0, u1\to v1$ and removing the dipole means that we replace that pair of branches by one branch $u\to v$. The inverse operation is called \emph{inserting a dipole}. A tree diagram without dipoles is called \emph{reduced} and every element of $V$ is represented by a unique reduced tree diagram.

For every  two elements $g,g'\in V$ represented by tree diagrams $(A_+,\alpha, A_-)$ and  \linebreak $(B_+, \beta, B_-)$,
one can insert several dipoles into these tree diagrams to obtain tree diagrams $(A'_+, \alpha', A'_-)$
and $(B'_+, \beta',B'_-)$ representing the same elements and such that $B'_+=A'_-$. Hence $\alpha'$ and  $\beta'$ are permutations from the same symmetric group. Then the reduced (after removing dipoles) form of $(A'_+,\alpha'\beta', B'_-)$ is the reduced tree diagram of the product $gg'$ in $V$. Here, homeomorphisms in $V$ and permutations in the symmetric group are composed from left to right. The inverse $g\iv$ of $g$ is represented by $(A_-,\alpha\iv, A_+)$. This describes the group structure of $V$. The group $T$ is a subgroup of $V$ which  consists of tree diagrams $(T_+, \sigma,T_-)$ where $\sigma$ is a {cyclic permutation} and $F$ is a subgroup of $T$ consisting of tree diagrams $(T_+,\sigma,T_-)$ where $\sigma$ is the identity  permutation. Thus $F$ acts by homeomorphisms on the unit interval $[0,1]$ and $T$ acts by homeomorphisms on the circle.

Most papers devoted to $F, T, V$ use the algebraic structure of these groups or dynamical properties of the action of $T$ on the circle $S^1$ \cite{GhS} or of $V$ on the Cantor set (see \cite{BBGGHMS, BS} and other papers) or representation of $V$ as (certain kinds of) diagram groups \cite{GubSa}. The known facts about geometry of $V$ and its subgroups $F, T$ are few. One can mention  the (highly non-trivial and surprising) result from \cite{GubaInv} that the Dehn function of $F$ is quadratic, the fact \cite{Farley} that $V$ acts properly by isometries on a locally finite dimensional CAT(0) cubical complex, and the result about distortion of various subgroups of $F, T, V$. For example, by \cite{BBGGHMS} all cyclic subgroups of $V$ are undistorted. The reason for only a few geometric results is that R. Thompson groups are clearly very far from being hyperbolic. Since these groups are simple or ``almost simple'' (as $F$), they cannot be, say, acylindrically hyperbolic, which is currently the largest well-studied class of groups which are ``close'' to being hyperbolic.

Still, the three geometric results mentioned above show that $F, T, V$ have some hyperbolicity (or at least some non-negative curvature) and further geometric properties of these groups should be studied.

One such property is existence of cut-points in asymptotic cones, or, equivalently \cite{DMS}, a superlinear divergence function. Divergence functions of metric spaces were introduced and studied in \cite{DMS}. If $G$ is a finitely generated group and $\Gamma$ is its Cayley graph corresponding to some finite generating set, then the divergence function of $G$ is the smallest function $f(n)$ such that every two vertices of $\Gamma$ at distance $n$ from the identity can be connected by a path in $\Gamma$ of length $\le f(n)$ and avoiding the ball of radius $\frac 14 n$ with a center at $1$. If we replace in the definition $\frac 14$  by any other constant $\delta<\frac 14$ we get an equivalent function (with the natural equivalence of functions used in Geometric Group Theory). Superlinear  divergence is equivalent to existence of cut points in some asymptotic cones of $G$. Since asymptotic cones of acylindrically hyperbolic groups are tree-graded spaces \cite{Sisto} (hence every point there is a cut-point), a superlinear divergence function shows some hyperbolicity of the group. On the other hand, a linear divergence function is of course also a geometric property of the group so both linear and superlinear divergence functions are interesting from the geometric point of view.

In \cite{DrSa, DMS} it is proved that the following finitely generated groups have linear divergence:

\begin{itemize}
	\item Direct products of infinite groups,
	\item Non-virtally cyclic groups satisfying a non-trivial law,
	\item Non-virtually cyclic groups which have infinite central cyclic subgroups,
	\item Uniform lattices in higher rank semi-simple Lie groups,
	\item Lattices in ${\mathbb Q}$-rank one semi-simple Lie groups of real rank $\ge 2$,
	\item $\SL_n(\Z), n\ge 3$.
\end{itemize}

It is natural then to ask (and the second author was asked several times by various people) whether the R. Thompson groups $F, T, V$ have linear divergence. In this paper, we give an affirmative answer to this question.

Our task was made easier by the fact that some convenient generating sets of $F$, $T$, $V$ are known \cite{CFP}. Namely, Thompson group $F$ is generated by the elements $x_0,x_1$ depicted in Figure \ref{fig:x0x1}. Thompson group $T$ is generated by $x_0,x_1,c_1$ where $c_1$ is depicted in Figure \ref{fig:c1}. Thompson group $V$ is generated by $x_0,x_1,c_1$ and $\pi_0$, where $\pi_0$ is depicted in Figure \ref{fig:pi0}.

\begin{figure}[ht]
	\centering
	\begin{subfigure}{.5\textwidth}
		\centering
		\includegraphics[width=.5\linewidth]{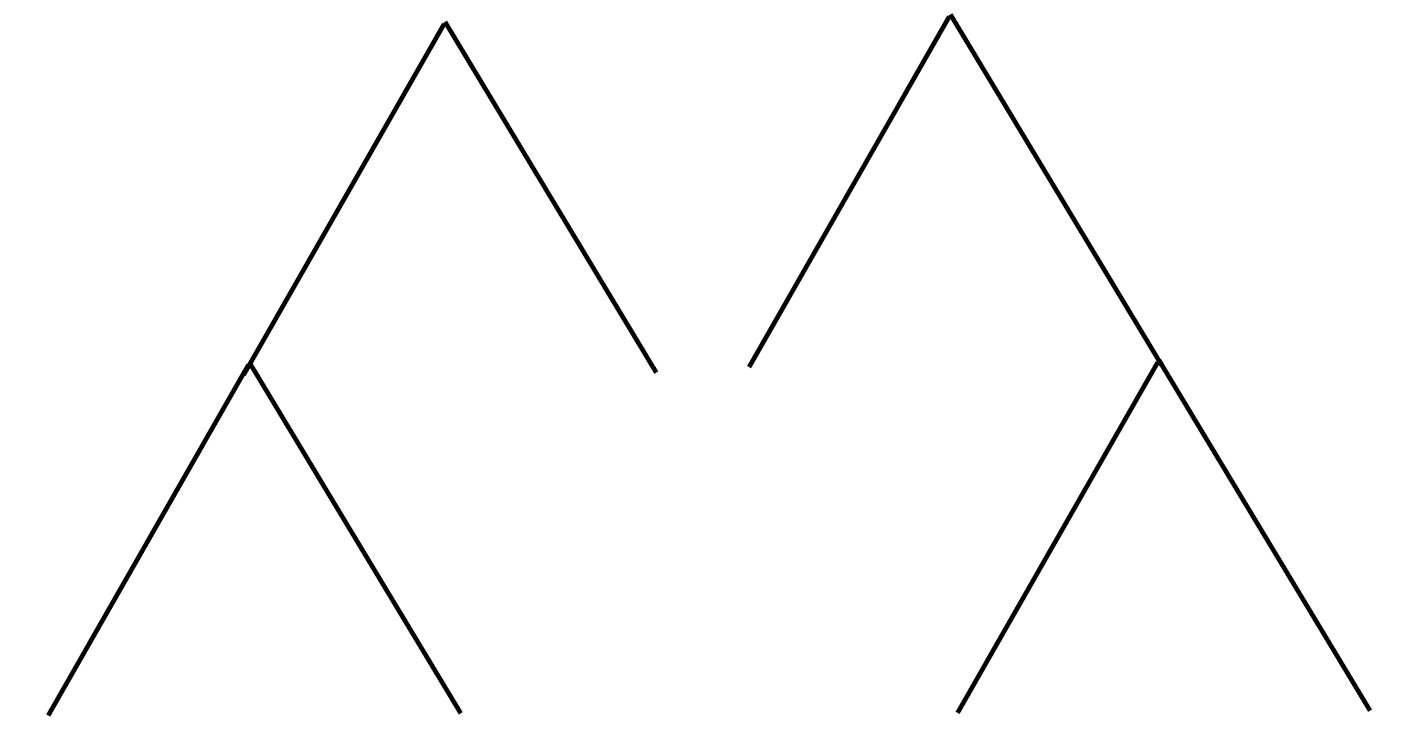}
		\caption{}
		\label{fig:x0}
	\end{subfigure}%
	\begin{subfigure}{.5\textwidth}
		\centering
		\includegraphics[width=.5\linewidth]{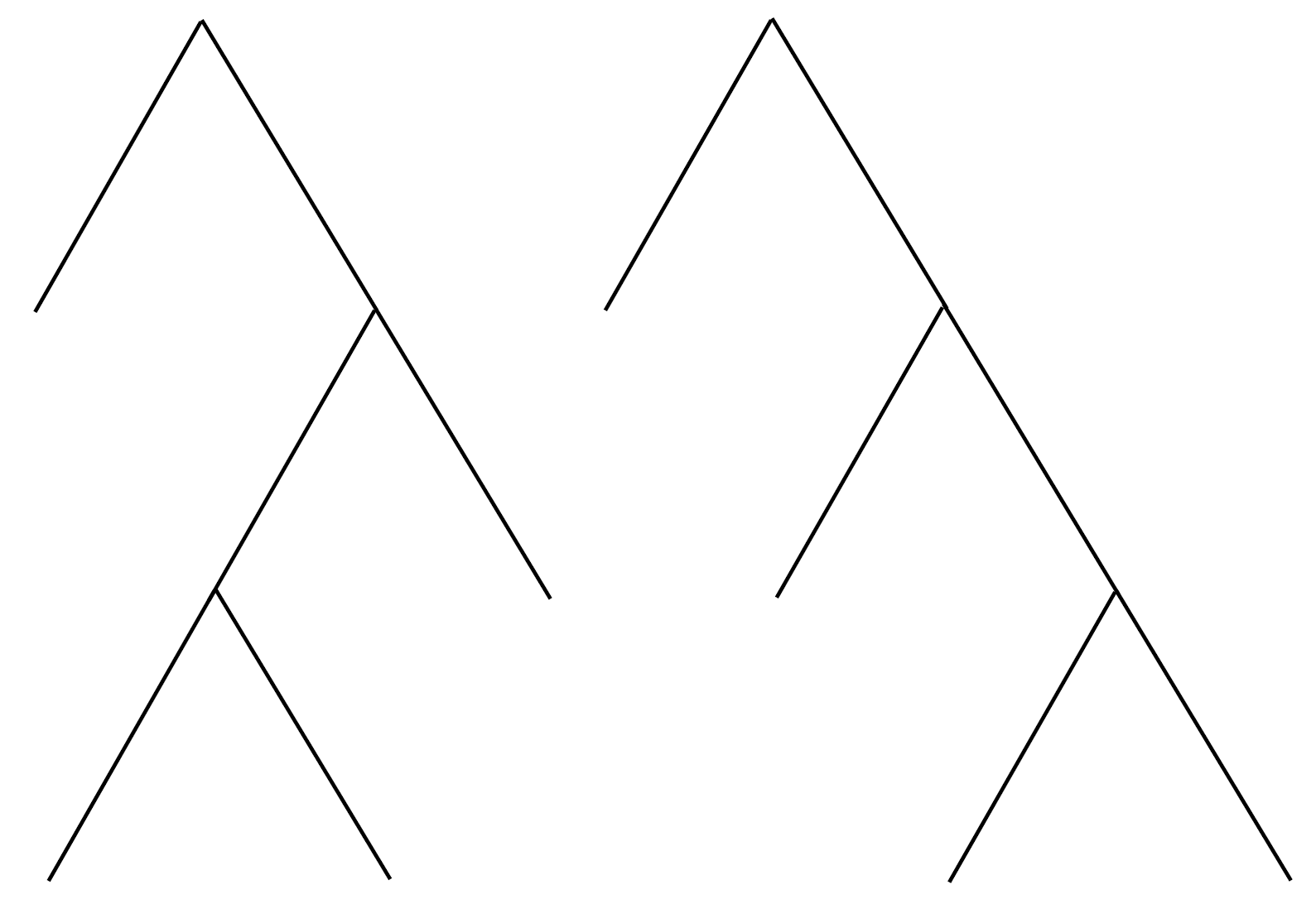}
		\caption{}
		\label{fig:x1}
	\end{subfigure}
	\caption{(a) The tree-diagram of $x_0$. (b) The tree-diagram of $x_1$. In both figures, $T_+$ is on the left, $T_-$ is on the right and the permutation $\sigma$ is the identity.}
	\label{fig:x0x1}
\end{figure}

\begin{figure}[ht]
	\centering
	\begin{subfigure}{.5\textwidth}
		\centering
		\includegraphics[width=.5\linewidth]{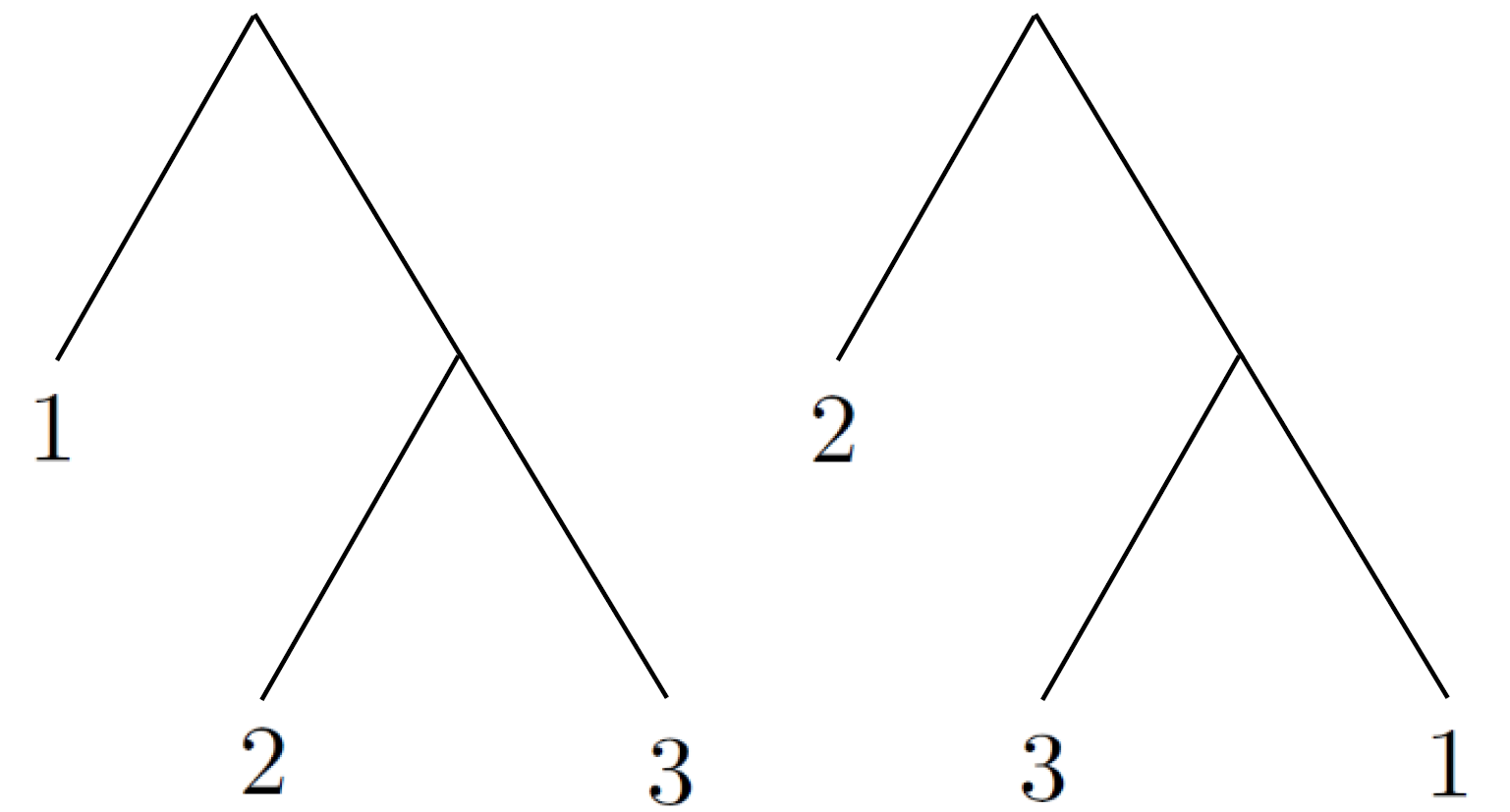}
		\caption{}
		\label{fig:c1}
	\end{subfigure}%
	\begin{subfigure}{.5\textwidth}
		\centering
		\includegraphics[width=.5\linewidth]{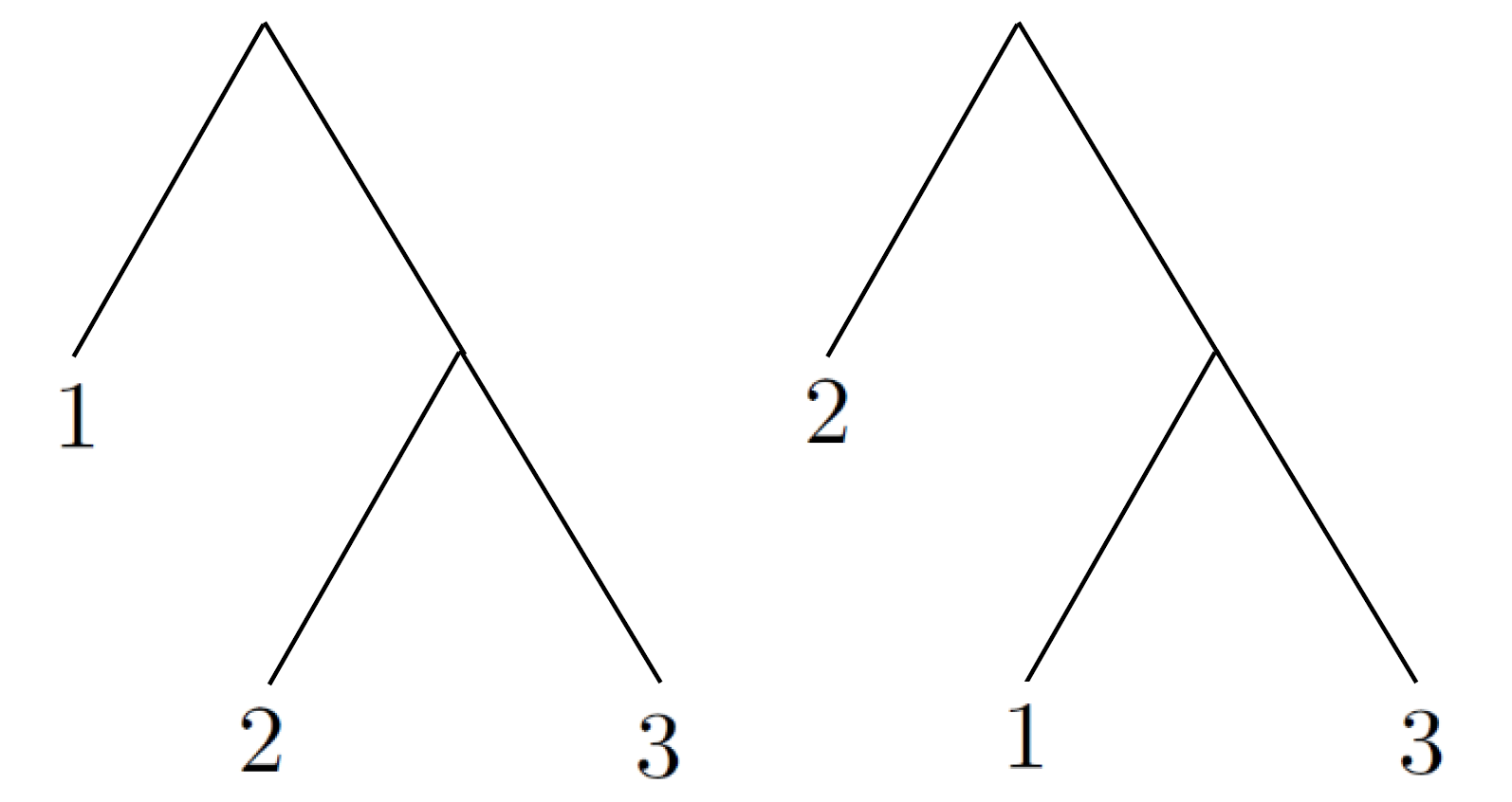}
		\caption{}
		\label{fig:pi0}
	\end{subfigure}
	\caption{(a) The tree-diagram of $c_1$. (b) The tree-diagram of $\pi_0$. In both figures, $T_+$ is on the left, $T_-$ is on the right and the permutation $\sigma$ maps each leaf of $T_+$ to the leaf of $T_-$ with the same label.}
	\label{fig:c1pi0}
\end{figure}

It was also helpful to have information about length functions of these groups. For example, in the case of the groups $\SL_n(\Z)$, Dru\c tu, Mozes and Sapir used the result of Lubotzky-Mozes-Raghunatan about the length function of that group \cite{LMR}. In the cases of $F$ and $T$ good enough information can be found in \cite{BCS} and \cite{BCST} (see Proposition \ref{Cc} below). But in the case of $V$ only partial information was known \cite{B}. Thus the case of the group $V$ turned out to be more complicated. However, after we completed the proof that $V$ has  linear divergence function, we realized that the same proof works for $F$ and $T$ as well.

The proof for $V$ works (very roughly) as follows. With every natural number $n$
we associate an element $g_n \in F$. The element $g_n$, viewed as a homeomorphism of $[0,1]$ is supported in a small subinterval $[0,a_n]$. Its word-length is approximately $Qn$ for some large constant $Q$. Given two natural numbers  $m,n$, it is easy to get from $g_m$ to $g_n$ via a path which avoids a ball around $1$ of radius $\delta\min\{m,n\}$ and has length linear in $m+n$. To finish, we prove that one can get from each element $h$ to the element $g_{|h|}\in F$ via a path which avoids a ball of radius $\delta|h|$ about the identity and has length linear in $|h|$. To construct that path, we find an element $s\in V$ such that $hs$ fixes the interval $[0,a_{|h|}]$ pointwise, so that $hs$ commutes with $g_{|h|}$. Then we move $h\to hs\to g_{|h|}hs\to g_{|h|}$.


It would be interesting to know whether that proof can be extended to other ``Thompson-like'' groups, such as higher dimensional Thompson groups \cite{Brin}, braided Thompson groups \cite{Brin1}, Lodha-Moore group \cite{LM} , topological full groups \cite{JM}, etc. In any case it would be interesting to know if these groups have linear divergence functions.

\section{The proof}

Let $\mathcal{A}=\{x_0,x_1\}$, $\mathcal{B}=\{x_0,x_1,c_1\}$ and $\mathcal{C}=\{x_0,x_1,c_1,\pi_0\}$ be the standard generating sets of $F$, $T$ and $V$ respectively. For an element $g\in V$ we denote by $\mathcal N(g)$ the number of leaves in each of the trees in the reduced tree diagram representing $g$.
We will use the following Proposition. Part (1) follows from \cite{BCS}. Part (2) follows from \cite{BCST}. Part (3) follows from \cite{B}.

\begin{Proposition}\label{Cc}
	There exist constants $0<c<1$ and $C>1$ such that the following hold.
	\begin{enumerate}
		\item[(1)] For every $g\in F$ we have $c\mathcal N(g)\le|g|_{\mathcal{A}}\le C\mathcal N(g).$
		\item[(2)] For every  $g\in T$  we have	$c\mathcal N(g)\le|g|_{\mathcal{B}}\le C\mathcal N(g).$
		\item[(3)] For every  $g\in V$ we have	$c\mathcal N(g)\le|g|_{\mathcal{C}}.$
	\end{enumerate}
\end{Proposition}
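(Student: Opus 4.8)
The two inequalities will be proved by completely different means. The lower bounds $c\mathcal N(g)\le|g|$ in all three parts will follow from a single elementary argument that applies uniformly to $F$, $T$ and $V$; the upper bounds in (1) and (2) are the content of \cite{BCS} and \cite{BCST} respectively, and (3) deliberately asserts no upper bound. The genuinely nontrivial part is the upper bound, which I would quote rather than reprove.

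\textbf{The lower bounds.} The key point is that right multiplication by a single generator changes $\mathcal N$ by at most a constant. First I would record: if $A,B$ are finite full binary trees viewed as standard dyadic subdivisions of $[0,1)$, then, since any two standard dyadic intervals are nested or disjoint, the coarsest common refinement $A\vee B$ is again such a subdivision and its set of division points is the union of those of $A$ and of $B$, whence $\mathcal N(A\vee B)\le\mathcal N(A)+\mathcal N(B)-1$. Fix $\mathcal S\in\{\mathcal A,\mathcal B,\mathcal C\}$ and set $K=\bigl(\max_{s\in\mathcal S\cup\mathcal S\iv}\mathcal N(s)\bigr)-1$, a constant depending only on $\mathcal S$. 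If $g=(T_+,\sigma,T_-)$ is reduced and $s=(S_+,\tau,S_-)$ is a generator, then $gs$ is computed by pulling both diagrams back to $R=T_-\vee S_+$, composing the permutations, and removing dipoles; since removing a dipole only deletes leaves, $\mathcal N(gs)\le\mathcal N(R)\le\mathcal N(g)+K$. As $\mathcal N(1)=1$, induction on the length of a geodesic word for $g$ gives $\mathcal N(g)\le1+K|g|_{\mathcal S}$, hence $|g|_{\mathcal S}\ge(\mathcal N(g)-1)/K$, and since $\mathcal N(g)\ge2$ for $g\ne1$ this is $\ge c\,\mathcal N(g)$ with $c=1/(2K)$. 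This establishes the left-hand inequality in (1), (2) and (3) simultaneously (for (3) one just includes $\pi_0$ when computing $K$), and in particular reproves by a uniform route the part of (3) attributed to \cite{B}.

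\textbf{The upper bounds in (1), (2), and the main obstacle.} Here, given $g=(T_+,\sigma,T_-)$ with $\mathcal N(g)=n$, one must exhibit a word of length $O(n)$. Let $R_n$ be the right vine with $n$ leaves and factor $g=(T_+,R_n)\cdot(R_n,\sigma,R_n)\cdot(R_n,T_-)$, the outer factors lying in $F$ and the middle one being purely permutational; this reduces the task to (a) showing $|(T,R_n)|_{\mathcal A}\le C'n$ for an arbitrary binary tree $T$ with $n$ leaves, and, when $g\in T$, (b) bounding the $\mathcal B$-length of the cyclic middle factor by $C'n$. For (a) one rebuilds $T$ from $R_n$ by $O(n)$ elementary caret rotations, each realized by multiplying by a conjugate $x_0^{-k}x_1^{\pm 1}x_0^{k}$ of a generator; the main obstacle is that the exponents $k$ could a priori accumulate, and the resolution is to perform the rotations in a depth-first order so that the total word length stays $O(n)$. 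This is exactly the estimate of Burillo--Cleary--Stein \cite{BCS}, and the analogous accounting for the cyclic factor is that of Burillo--Cleary--Stein--Taback \cite{BCST}; I would quote these. Combining the pieces yields $|g|_{\mathcal A}\le C\mathcal N(g)$ on $F$ and $|g|_{\mathcal B}\le C\mathcal N(g)$ on $T$. No such bound is claimed --- or true --- in $V$: a purely permutational element $(R_n,\sigma,R_n)$ with $\sigma$ an arbitrary permutation of $n$ points needs $\Omega(n\log n)$ letters of $\mathcal C$ by a sorting-network lower bound, although $\mathcal N=n$, which is exactly why part (3) is one-sided.
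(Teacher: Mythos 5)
Your proposal is correct, and it is in fact more self-contained than what the paper does: the paper offers no argument at all for Proposition \ref{Cc}, simply citing \cite{BCS} for part (1), \cite{BCST} for part (2), and \cite{B} for part (3). Your treatment of the upper bounds coincides with the paper's (deferral to \cite{BCS}, \cite{BCST}), and your sketch of the vine decomposition $g=(T_+,R_n)\cdot(R_n,\sigma,R_n)\cdot(R_n,T_-)$ is a fair summary of what those references establish. Where you genuinely diverge is the lower bound: instead of citing \cite{B} for part (3) and the respective references for the lower halves of (1) and (2), you prove $c\,\mathcal N(g)\le|g|$ uniformly from the Lipschitz estimate $\mathcal N(gs)\le\mathcal N(g)+K$, which follows correctly from your observation that the common refinement $T_-\vee S_+$ has at most $\mathcal N(g)+\mathcal N(s)-1$ leaves and that dipole reduction only removes leaves. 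This buys a short, generator-set-independent argument covering all three groups at once, at no cost, and is exactly the kind of argument the cited papers themselves use for their lower bounds.

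Two small caveats. First, as stated (both in the proposition and in your proof) the inequality $c\,\mathcal N(g)\le|g|$ fails for $g=1$, where $\mathcal N(1)=1$ and $|1|=0$; you implicitly exclude the identity, and the paper only ever applies the proposition to elements with $\mathcal N(g)\ge4$, so this is harmless but worth flagging. Second, your closing aside that an \emph{arbitrary} permutational element $(R_n,\sigma,R_n)$ needs $\Omega(n\log n)$ letters is overstated (the identity permutation needs none); the correct statement, due to Birget \cite{B}, is that \emph{some} elements with $n$ leaves require length $\Omega(n\log n)$, most cleanly seen by counting: there are $2^{\Theta(n\log n)}$ elements of $V$ with at most $n$ leaves but only $2^{O(L)}$ words of length at most $L$. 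Neither point affects the validity of your proof of the proposition as it is used in the paper.
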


Let $g\in V$ with reduced tree-diagram $(T_+,\sigma,T_-)$. We will call $T_+$ the \emph{domain-tree} of $g$ and $T_-$ the \emph{range-tree} of $g$.  If other elements are considered as well, we will sometimes denote the domain-tree and range-tree of $g$ by $T_+(g)$ and $T_-(g)$, respectively. We will say that $u\rightarrow v$ is a \emph{branch} of $g$ if it is a branch of the reduced tree-diagram of $g$. If $u\rightarrow v$ is a branch of some tree-diagram of $g$ we say that $g$ \emph{takes the branch $u$ onto $v$}. Note that $g$ takes the branch $u$ onto $v$ if and only if $g$ maps the dyadic interval $[u)$ linearly onto $[v)$. Note also that if $g$ takes the branch $u$ onto $v$ then for some common (possibly empty) suffix $w$ of $u$ and $v$ we have $u\equiv u'w$ \footnote{$p\equiv q$ denotes letter-by-letter equality of words $p, q$.}, $v\equiv v'w$ and $g$ has the branch $u'\rightarrow v'$.

 Given a tree $T$, we denote by  $\ell_0(T)$ the length of the left most branch of $T$, that is, $\ell_0(T)=\ell$ if and only if $0^\ell$ is a branch of $T$. Similarly, we denote by $\ell_1(T)$ the length of the right most branch of $T$. Let $T$ be a non-empty binary tree. We note that $T_+(x_0)$ is a rooted subtree of $T$ if and only if $\ell_0(T)\neq 1$. Indeed, if $\ell_0(T)\neq 1$ then $00$ is a prefix of some branch of $T$ and as such, $01$ and $1$ are also prefixes of some branches of $T$. For an element $g\in V$ we let $\ell_i(g):=\ell_i(T_-(g))$, $i=1,2$.

 We will need the following lemmas.

\begin{Lemma}\label{x0}
	Let $g\in V$ with reduced tree diagram $(T_+,\sigma,T_-)$ and assume that $\mathcal N(g)\ge 4$. 
	Then
	$$\N(g)-1\le\N(gx_0)\le\N(g)+1.$$
	In addition, 	
	\begin{enumerate}
		\item[(1)] If $\ell_0(g)=1$ (i.e., if $T_+(x_0)$ is not a rooted subtree of $T_-$) then $\N(gx_0)=\N(g)+1$ and $\ell_0(gx_0)=1$.
		\item[(2)] If $\ell_0(g)\neq 1$ then $\N(gx_0)=\N(g)$ or
		$\N(gx_0)=\N(g)-1$. Moreover, $\ell_0(gx_0)=\ell_0(g)-1$.	
		\item[(3)] If $\ell_0(g)\neq 1$ and either $1$ or $01$ is a strict prefix of some branch of $T_-$ then $\N(gx_0)=\N(g)$ and $1$ is a strict prefix of some branch of $T_-(gx_0)$.
	\end{enumerate}
\end{Lemma}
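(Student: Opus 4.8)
The plan is to compute a tree diagram for $gx_0$ directly from the reduced diagram $(T_+,\sigma,T_-)$ of $g$ and the reduced diagram of $x_0$ (which has $T_+(x_0)=\{00,01,1\}$, $T_-(x_0)=\{0,10,11\}$, with branches $00\to 0$, $01\to 10$, $1\to 11$), and then bound how much reduction is needed. For a finite binary word $p$ and a tree $U$ with branches $w_1,\dots,w_k$ write $p\cdot U$ for the tree with branches $pw_1,\dots,pw_k$, and for a node $v$ of $T_-$ write $T_v$ for the subtree of $T_-$ hanging below $v$. Following the multiplication rule described in the Introduction, let $S$ be the minimal common refinement of $T_-$ and $T_+(x_0)$, refine both diagrams to have middle tree $S$, and obtain an (a priori non-reduced) diagram $(T'_+,\tau,R)$ for $gx_0$, where $R$ is the resulting range tree and $\tau=\sigma\,\sigma(x'_0)$ for the refined diagram $x'_0$ of $x_0$. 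The key bookkeeping remark is that $\tau^{-1}=\sigma^{-1}\circ\phi$, where $\phi\colon R\to S$ is the leaf bijection induced by $x'_0$; concretely $\phi$ sends a branch $0w$ of $R$ to $00w$, a branch $10w$ to $01w$, and a branch $11w$ to $1w$. Since $(T_+,\sigma,T_-)$ is reduced, if $q0\to\tau(q0)$, $q1\to\tau(q1)$ is a dipole of $(T'_+,\tau,R)$ and $\phi$ carries the sibling pair $\{q0,q1\}$ of $R$ to a sibling pair of $S=T_-$, then $\tau^{-1}(q0)=\sigma^{-1}\phi(q0)$ and $\tau^{-1}(q1)=\sigma^{-1}\phi(q1)$ would be a sibling pair of $T_+$, a dipole of $g$ — impossible. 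So every dipole of $(T'_+,\tau,R)$ must sit over a pair of sibling leaves of $R$ that $\phi$ does \emph{not} carry to a pair of sibling leaves of $T_-$.

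Case $\ell_0(g)=1$. Then $0$ is a branch of $T_-$, so $T_-=\{0\}\cup 1\cdot T_1$ with $|T_1|=\N(g)-1\ge 3$, and $T_+(x_0)$ is not a rooted subtree of $T_-$; the minimal common refinement only splits the leaf $0$ into $\{00,01\}$, so $S=\{00,01\}\cup 1\cdot T_1$ with $|S|=\N(g)+1$, while refining $x_0$ only subdivides its branch $1\to 11$, giving $R=\{0,10\}\cup 11\cdot T_1$, again with $\N(g)+1$ leaves and leftmost branch $0$. Since $|T_1|\ge 3$, both $1$ and $11$ are interior nodes of $R$, so the only pairs of sibling leaves of $R$ are the pairs $\{11w0,\,11w1\}$ coming from pairs of sibling leaves $\{w0,w1\}$ of $T_1$, and on each of them $\phi$ is the shift $11wi\mapsto 1wi$, landing on a pair of sibling leaves of $T_-$. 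Hence $(T'_+,\tau,R)$ has no dipole; it is the reduced diagram of $gx_0$, so $\N(gx_0)=\N(g)+1$ and $\ell_0(gx_0)=1$. This is (1).

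Case $\ell_0(g)\neq 1$. Then $00$, $01$ and $1$ are all prefixes of branches of $T_-$, so $T_+(x_0)$ is a rooted subtree of $T_-$, $S=T_-=\{00\cdot T_{00},\,01\cdot T_{01},\,1\cdot T_1\}$, and refining $x_0$ to have domain $T_-$ gives $R=\{0\cdot T_{00},\,10\cdot T_{01},\,11\cdot T_1\}$ with $|R|=\N(g)$ and leftmost branch $0\cdot 0^{\ell_0(g)-2}=0^{\ell_0(g)-1}$. Going block by block, $\phi$ carries every pair of sibling leaves of $R$ to a pair of sibling leaves of $T_-$, with the single exception of $\{10,11\}$ — because $\phi(10)=01$ and $\phi(11)=1$ are not siblings in $T_-$ — and $\{10,11\}$ is a pair of sibling leaves of $R$ exactly when $T_{01}$ and $T_1$ are both trivial, i.e. when neither $1$ nor $01$ is a strict prefix of a branch of $T_-$. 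Consequently: if $1$ or $01$ is a strict prefix of some branch of $T_-$, then $(T'_+,\tau,R)$ has no dipole, so $\N(gx_0)=\N(g)$ and $T_-(gx_0)=R$ contains the branch $10\cdot(\cdots)$ with prefix $10$, so $1$ is a strict prefix of a branch of $T_-(gx_0)$ — this is (3). Otherwise the only candidate dipole is $\{10,11\}$, present exactly when $\sigma^{-1}(01)$ and $\sigma^{-1}(1)$ are siblings in $T_+$; removing it lowers the leaf count by exactly $1$, and here $\N(g)\ge 4$ forces $T_{00}$ to be non-trivial, so the collapsed node $1$ of the new range tree still has an interior sibling and no further dipole is created. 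Thus $\N(gx_0)\in\{\N(g)-1,\N(g)\}$, and since the reduction (if any) happens on the right, the leftmost branch of the reduced range tree is still $0^{\ell_0(g)-1}$, so $\ell_0(gx_0)=\ell_0(g)-1$ — this is (2). Combining the two cases yields $\N(g)-1\le\N(gx_0)\le\N(g)+1$.

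The routine parts are the explicit descriptions of $S$, $R$ and $\phi$ and the block-by-block computation of the $\phi$-images of sibling pairs. The one genuinely delicate point — and the only place where the hypothesis $\N(g)\ge 4$ is needed — is the reduction analysis in the last case: showing that $\{10,11\}$ is the unique pair over which a dipole can occur, and that collapsing that dipole cannot trigger a second one.
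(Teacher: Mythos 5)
Your proof is correct and follows essentially the same route as the paper's: you compute the product diagram explicitly, use reducedness of $g$ to rule out dipoles over every sibling pair of the new range tree except $\{10,11\}$, and use $\N(g)\ge 4$ to show at most one reduction occurs; your bijection $\phi$ is just a systematic repackaging of the paper's prefix case analysis on $t$. (Only a cosmetic remark: your "key remark" is stated for $S=T_-$ and your notation $q0\to\tau(q0)$ momentarily treats $q0$ as a leaf of the domain tree rather than of $R$, but the intended meaning is clear and the case $\ell_0(g)=1$ is handled correctly by the separate direct argument.)
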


\begin{proof}
	
	Assume first that $\ell_0(g)\neq 1$ so that $T_+(x_0)$ is a rooted subtree of $T_-(g)$, as in Figure \ref{fig:T-}. Then to multiply $g$ by $x_0$ we replace the standard tree-diagram of $x_0$ by an equivalent tree-diagram $(R_+,\mathrm{Id},R_-)$  where $R_+=T_-(g)$. We note that $R_-$ is obtained from $T_-(g)$ by removing its rooted subtree $T_+(x_0)$ and inserting the rooted subtree $T_-(x_0)$ instead (see Figure \ref{fig:x0equiv}). Then $(T_+(g),\sigma, R_-)$ is a tree-diagram of $gx_0$. Since $R_-$ was obtained from $T_-(g)$ by replacing its rooted subtree $T_+(x_0)$ by $T_-(x_0)$, the branches of the tree diagram $(T_+(g),\sigma, R_-)$ are obtained from those of $g=(T_+(g),\sigma,T_-(g))$ as follows: if $u\rightarrow wv$ is a branch of $g$ for  $w\equiv00$ or $w\equiv01$ or $w\equiv1$ then the tree-diagram $(T_+(g),\sigma, R_-)$ has a branch $u\rightarrow w'v$ for $w'\equiv0$ or $w'\equiv 10$ or $w'\equiv 11$, respectively. All branches of $(T_+(g),\sigma, R_-)$ are obtained in this way. In particular, $\ell_0(R_-)=\ell_0(T_-(g))-1=\ell_0(g)-1$.
	

	\begin{figure}[ht]
	\centering
	\includegraphics[width=.3\linewidth,height=.3\linewidth]{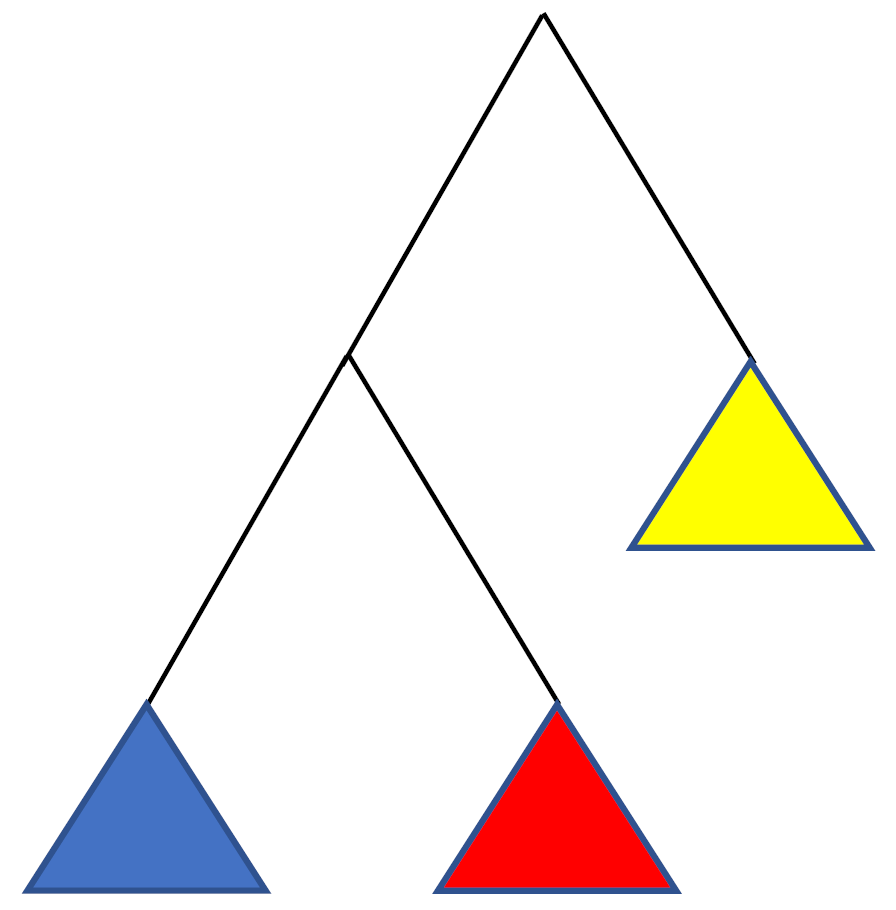}
	\caption{The tree $T_-(g)$ when $\ell_0(g)\neq 1$.}
	\label{fig:T-}
\end{figure}

	\begin{figure}[ht]
	\centering
	\includegraphics[width=.65\linewidth,height=.3\linewidth]{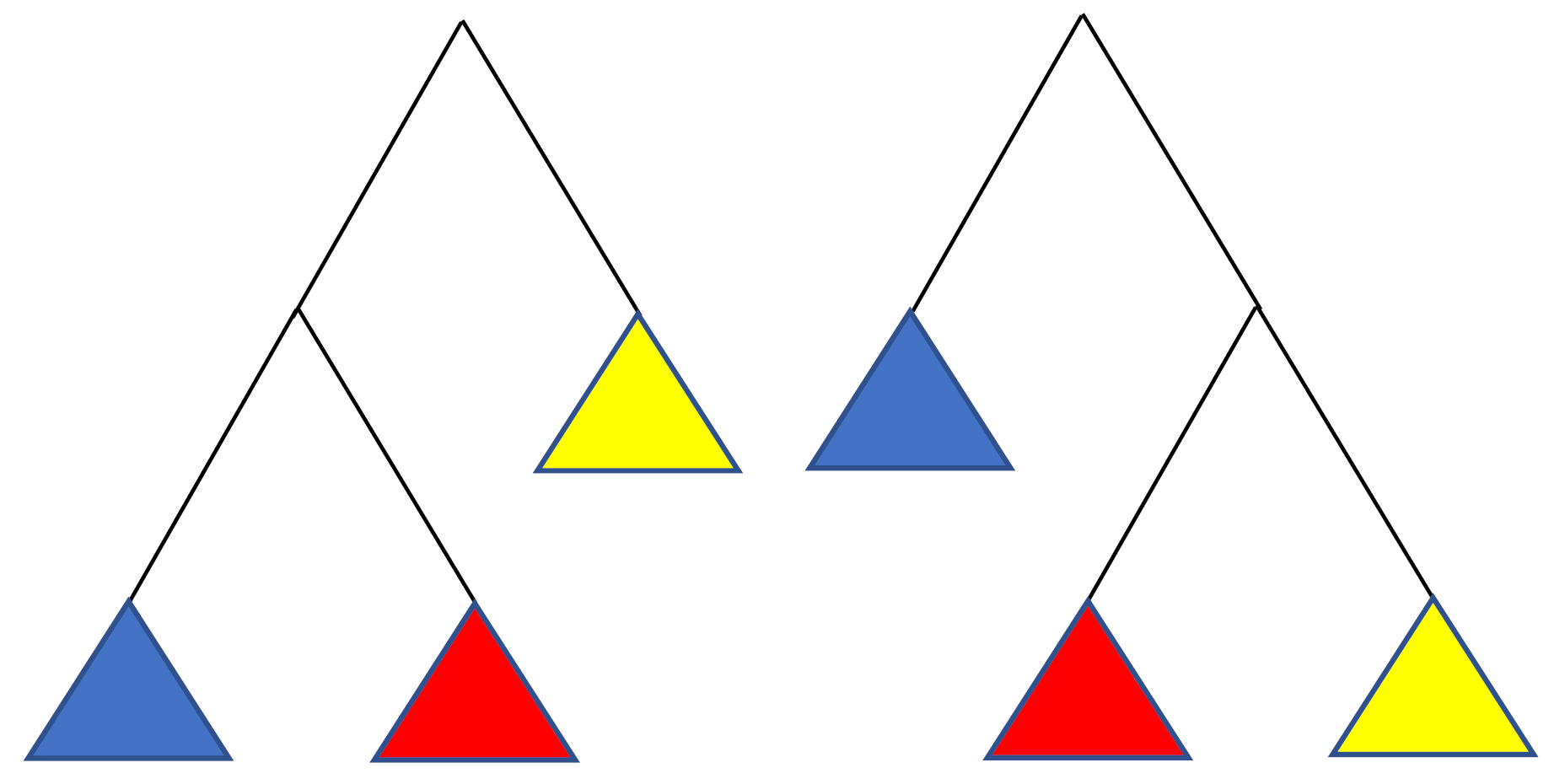}
	\caption{The tree-diagram $(R_+,\mathrm{Id},R_-)$ of $x_0$.}
	\label{fig:x0equiv}
\end{figure}

	Now $(T_+(g),\sigma, R_-)$ is reduced if and only if 
	it has no pair of branches of the form $u0\rightarrow t0$ and $u1\rightarrow t1$. If $(T_+(g),\sigma, R_-)$ is not reduced, let  $u0\rightarrow t0$ and $u1\rightarrow t1$ be such a pair of branches. We note that $t$ is not empty as $1$ is not a branch of $R_-$. If $0$ is a prefix of $t$, then $g=(T_+,\sigma,T_-)$ has the pair of branches $u0\rightarrow 0t0$ and $u1\rightarrow 0t1$ in contradiction to it being reduced. Hence, $1$ must be a prefix of $t$. If $10$ is a prefix of $t$ or $11$ is a prefix of $t$, the assumption that $(T_+,\sigma,T_-)$ is reduced yields a contradiction in a similar way. Hence $t\equiv 1$. So $R_-$ has branches of the form $10$ and $11$. Clearly, a single reduction of the common caret of $(T_+(g),\sigma, R_-)$ associated with these branches yields the reduced tree diagram of $gx_0$. Indeed, when reducing the common caret, we remain with the branch $1$ in $R_-$. As $\N(g)\ge 4$, the tree $R_-$ had at least $4$ leaves to begin with and so $0$ is a strict prefix of some branch of $R_-$. Hence, no further reductions can occur.
	
	It follows that, if $\ell_0(g)\neq 1$ then either $(T_+(g),\sigma, R_-)$ is reduced or the reduced-tree diagram of $gx_0$ is obtained from $(T_+(g),\sigma,R_-)$ by a single reduction which does not affect the left most branch of $R_-$. Hence $\N(gx_0)=\N(g)$ or $\N(gx_0)=\N(g)-1$ and $\ell_0(gx_0)=\ell_0(R_-)=\ell_0(g)-1$. Hence, part (2) holds.
	
	In the conditions of part (3) of the lemma the tree-diagram $(T_-(g),\sigma, R_-)$ is reduced. Indeed, since $1$ or $01$ is a strict prefix of some branch of $T_-(g)$, either $11$ or $10$ is a strict prefix of some branch of $R_-$. As noted above, in that case $(T_+(g),\sigma, R_-)$ is reduced. In addition, $1$ is a strict prefix of some branch of $T_-(gx_0)=R_-$. Hence, part (3) holds.
	
	Now assume that $\ell_0(g)=1$. To multiply $g$ by $x_0$, we replace the tree diagram $(T_+,\sigma,T_-)$ by an equivalent tree diagram $(T_+',\sigma',T_-')$ by attaching a common caret to the first leaf of $T_-$ (i.e., to the branch $0$ of $T_-$) and to the corresponding leaf of $T_+$. Then $T_+(x_0)$ is a rooted subtree of $T_-'$ and we can proceed as above. Let $R_-$ be the tree obtained from $T_-'$ by replacing the rooted subtree $T_+(x_0)$ with the rooted subtree $T_-(x_0)$. Then $(T_+',\sigma', R_-)$ is a tree-diagram of $gx_0$. To complete the proof it suffices to prove that $(T_+',\sigma', R_-)$ is reduced. Indeed, in that case, $\N(gx_0)=\N(g)+1$ and $\ell_0(gx_0)=\ell_0(R_-)=1$. By construction, the first two branches of $T_-'$ are $00$ and $01$. Hence the first two branches of $R_-$ are $0$ and $10$ and their leaves  do not form a dipole because they do not have a common parent in $R_-$. Every other branch of $R_-$ has $11$ as a strict prefix. Assume by contradiction that $u0\rightarrow 11t0$ and $u1\rightarrow 11t1$ are branches of  $(T_+',\sigma', R_-)$ forming a dipole. Then, from the construction of $R_-$, the tree $T_-$ has consecutive branches $1t0$ and $1t1$ and $g=(T_+,\sigma,T_-)$ has branches $u0\rightarrow 1t0$ and $u1\rightarrow 1t1$ in contradiction to it being reduced. Hence, part (1) holds. 	
\end{proof}

\begin{Corollary}\label{x0cor}
	Let $g\in V$ be such that $\mathcal N(g)\ge 4$. Let $\ell=\ell_0(g)$. Then for every $i\ge 0$ we have
	$$\N(gx_0^i)\ge \N(g)+i-2(\ell-1).$$
	Moreover, if either $1$ or $01$ is a strict prefix of some branch of $T_-(g)$ then
		$$\N(gx_0^i)= \max\{\N(g),\N(g)+i-(\ell-1)\}.$$
\end{Corollary}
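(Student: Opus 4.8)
The plan is to prove both statements by induction on $i$, applying Lemma \ref{x0} to $gx_0^j$ at each step and keeping track simultaneously of $\N(gx_0^j)$ and $\ell_0(gx_0^j)$. Lemma \ref{x0} forces a two-phase behaviour: as long as $\ell_0(gx_0^j)\neq 1$ we are in case (2), so one multiplication by $x_0$ decreases $\ell_0$ by exactly $1$ and changes $\N$ by $0$ or $-1$; once $\ell_0$ reaches $1$ we are in case (1), so every further multiplication by $x_0$ increases $\N$ by exactly $1$ and leaves $\ell_0=1$. Writing $\ell=\ell_0(g)$ (note $\ell\ge 1$, since $\N(g)\ge 4$ means $g$ is not the identity), it follows that $\ell_0(gx_0^j)=\max\{1,\ell-j\}$ for all $j$, that $\N(g)-j\le\N(gx_0^j)\le\N(g)$ for $0\le j\le\ell-1$, and that $\N(gx_0^{\ell-1+k})=\N(gx_0^{\ell-1})+k$ for $k\ge 0$.

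Granting this, the first inequality follows by splitting on the size of $i$: for $0\le i\le\ell-1$ we get $\N(gx_0^i)\ge\N(g)-i\ge\N(g)+i-2(\ell-1)$ (the last step because $i\le\ell-1$), and for $i=\ell-1+k\ge\ell-1$ we get $\N(gx_0^i)=\N(gx_0^{\ell-1})+k\ge\bigl(\N(g)-(\ell-1)\bigr)+k=\N(g)+i-2(\ell-1)$. For the ``Moreover'' statement one refines the first phase: if $1$ or $01$ is a strict prefix of some branch of $T_-(g)$, then part (3) of Lemma \ref{x0} applies, and the essential observation is that its conclusion again guarantees ``$1$ is a strict prefix of some branch of $T_-(gx_0)$''. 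Hence part (3) applies at \emph{every} step of the first phase, so $\N(gx_0^j)=\N(g)$ for $0\le j\le\ell-1$, while the second phase gives $\N(gx_0^{\ell-1+k})=\N(g)+k$; together these say exactly $\N(gx_0^i)=\max\{\N(g),\N(g)+i-(\ell-1)\}$.

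The point that needs care --- and the step I expect to be the main obstacle --- is that Lemma \ref{x0} is stated only for elements with at least $4$ leaves, whereas along the path $g,gx_0,gx_0^2,\dots$ the leaf-count can dip below $4$ near the end of the first phase. In that phase $\N$ drops by at most $1$ per step, so $\N(gx_0^j)\ge\N(g)-j\ge\N(g)-(\ell-1)$; and since a tree whose left-most branch has length $\ell$ has at least $\ell+1$ leaves, one always has $\ell\le\N(g)-1$, so this bound is $\ge 2$ --- but it can genuinely equal $2$ or $3$ (which forces $\ell$ to be close to $\N(g)$). To get through this window one has to analyse right multiplication by $x_0$ directly for the handful of elements with at most $3$ leaves, whose range trees are among the one-leaf tree, $\{0,1\}$, $\{00,01,1\}$ and $\{0,10,11\}$; this is a finite check, and it is exactly where the argument is delicate and where the precise shape of the first inequality has to be confirmed. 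No such issue arises for the ``Moreover'' statement: under its hypothesis $\N$ never drops below $\N(g)\ge 4$, so Lemma \ref{x0} applies at every step and that half of the Corollary goes through with no extra work.
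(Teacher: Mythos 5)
Your argument is the same two--phase induction as the paper's own proof: the paper first settles the case $\ell=1$ by iterating Lemma \ref{x0}(1), then reduces the general case to it by applying Lemma \ref{x0}(2) during the first $\ell-1$ steps, and obtains the ``Moreover'' clause from Lemma \ref{x0}(2,3) exactly as you do. The one place you go beyond the paper is your observation that Lemma \ref{x0} is only available for elements with at least $4$ leaves, and that along the orbit $g, gx_0, gx_0^2,\dots$ the leaf count can dip to $2$ or $3$ during the first phase. That worry is legitimate --- the paper iterates Lemma \ref{x0}(2) without verifying its hypothesis at each step --- but you leave its resolution as an unexecuted ``finite check'', and that is a genuine gap: if you actually carry the check out, it does \emph{not} confirm the stated inequality. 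Take $g=x_0^{-2}$, whose reduced range tree is $\{000,001,01,1\}$, so $\N(g)=4$ and $\ell=3$. Then $gx_0=x_0^{-1}$ has $3$ leaves, and $gx_0^{2}=1$ has $1$ leaf, whereas the claimed lower bound at $i=2$ is $\N(g)+2-2(\ell-1)=2$. The failure happens exactly in the window you flagged: once the leaf count reaches $3$ with range tree $\{00,01,1\}$, one more multiplication by $x_0$ can trigger a cascade of dipole cancellations and drop $\N$ by $2$, which is precisely the phenomenon the hypothesis $\N\ge 4$ in Lemma \ref{x0} is there to exclude.

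So the first inequality of the Corollary is false as literally stated (for $g=x_0^{-k}$, $k\ge 2$, it fails by $1$ at $i=\ell-1$); to repair it one must either weaken the bound by an additive constant or add a hypothesis guaranteeing that every intermediate element retains at least $4$ leaves. Neither change affects the paper: the Corollary is only invoked for elements such as $g_3$ with $\N(g_3)\ge (M-1)\N(g_1)$ and $\ell_0(g_3)\le\N(g_1)$, so the intermediate leaf counts stay far above $4$ and the two--phase argument goes through verbatim. Your treatment of the ``Moreover'' clause is correct and complete: under its hypothesis, Lemma \ref{x0}(3) propagates the condition ``$1$ is a strict prefix of some branch of the range tree'' through the entire first phase, so $\N$ never drops below $\N(g)\ge 4$ and the lemma's hypothesis holds at every step.
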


\begin{proof}
	Assume first that $\ell=1$. Then applying Lemma \ref{x0}(1) iteratively shows that for every $i\ge 0$,
	\begin{equation}\label{eq1}
	\N(gx_0^i)=\N(g)+i.
	\end{equation}
	Thus, we can assume that $\ell>1$. By Lemma \ref{x0}(2), $\ell_0(gx_0^{\ell-1})=1$ and for every $i\le \ell-1$
	\begin{equation}\label{eq2}
	\N(gx_0^{i})\ge\N(g)-i\ge\N(g)-(\ell-1)\ge N(g)+i-2(\ell-1),
	\end{equation}
	where the last inequality follows from $i\le \ell-1$. Then applying the case $\ell=1$ (equation \ref{eq1}) to the element $gx_0^{\ell-1}$ we have that for all $j\ge 0$
	\begin{equation}\label{eq3}
	\N(gx_0^{\ell-1}x_0^j)=\N(gx_0^{\ell-1})+j\ge\N(g)-(\ell-1)+j.
	\end{equation}
	Substituting $j=i-(\ell-1)$ in inequality \ref{eq3}  gives that for every $i\ge \ell-1$,
	\begin{equation}\label{eq4}
	\N(gx_0^i)\ge\N(g)+i-2(\ell-1).
	\end{equation}
	It follows from inequalities \ref{eq2} and \ref{eq4} that for every $i\ge 0$,  	
	\begin{equation}\label{eq5}
	\N(gx_0^i)\ge\N(g)+i-2(\ell-1),
	\end{equation}
	as required.
	
	Now assume that  either $1$ or $01$ is a strict prefix of some branch of $T_-(g)$. Then Lemma \ref{x0}(2,3) implies that $\N(gx_0^i)= \N(g)$ for every $i\in\{0,\dots,\ell-1\}$. Thus, it suffices to prove that for every $i\ge \ell-1$ we have
	$\N(gx_0^i)= \N(g)+i-(\ell-1)$.
	But, as noted in inequality \ref{eq3}, for every $j\ge 0$ \begin{equation}
	\N(gx_0^{\ell-1}x_0^j)=\N(gx_0^{\ell-1})+j.
	\end{equation}
	Since $\N(gx_0^{\ell-1})= \N(g)$, we get that for every $j\ge 0$
	\begin{equation}\label{eq6}
	\N(gx_0^{\ell-1+j})=\N(g)+j.
	\end{equation}
	Then for every $i\ge \ell-1$ we have
	\begin{equation}\label{eq7}
	\N(gx_0^{i})=\N(g)+i-(\ell-1),
	\end{equation}
	as required.
\end{proof}

The proofs of the following lemma and corollary are symmetric to those of Lemma \ref{x0} and Corollary \ref{x0cor}.

\begin{Lemma}\label{x0inv}
	Let $g\in V$ with reduced tree diagram $(T_+,\sigma,T_-)$ and assume that $\N(g)\ge 4$.
	Then
	$$\N(g)-1\le\N(gx_0^{-1})\le\N(g)+1.$$
	In addition, 	
	\begin{enumerate}
		\item[(1)] If $\ell_1(g)=1$ (i.e., if $T_+(x_0^{-1})$ is not a rooted subtree of $T_-$) then $\N(gx_0^{-1})=\N(g)+1$ and $\ell_1(gx_0^{-1})=1$.
		\item[(2)] If $\ell_1(g)\neq 1$ then $\N(gx_0^{-1})=\N(g)$ or
		$\N(gx_0^{-1})=\N(g)-1$. Moreover, $\ell_1(gx_0^{-1})=\ell_1(g)-1$.	
		\item[(3)] If $\ell_1(g)\neq 1$ and either $0$ or $10$ is a strict prefix of some branch of $T_-$ then $\N(gx_0^{-1})=\N(g)$ and $0$ is a strict prefix of some branch of $T_-(gx_0^{-1})$.
	\end{enumerate}
\end{Lemma}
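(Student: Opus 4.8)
The plan is to deduce the lemma from Lemma \ref{x0} by exploiting the left--right symmetry of binary trees, rather than redoing the dipole analysis. Let $\psi\colon V\to V$ be the automorphism given by conjugation by the bit-complement homeomorphism $w\mapsto\bar w$ of the Cantor set; concretely, $\psi(g)$ is represented by the tree diagram obtained from that of $g$ by reflecting every tree (swap the two subtrees at every caret and complement every bit of every branch word). Since the flip is an involution, $\psi$ is a genuine automorphism. First I would record the bookkeeping facts about $\psi$: reflection preserves the sibling relation, hence carries dipoles to dipoles, so the reflection of the reduced diagram of $g$ is the reduced diagram of $\psi(g)$ and in particular $\N(\psi(g))=\N(g)$; reflection exchanges the leftmost and rightmost branch of every tree, so $\ell_0(\psi(g))=\ell_1(g)$ and $\ell_1(\psi(g))=\ell_0(g)$; and a word $w$ is a (strict) prefix of some branch of $T_-(g)$ if and only if $\bar w$ is a (strict) prefix of some branch of $T_-(\psi(g))$.

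The one real computation is that $\psi(x_0)=x_0^{-1}$: reflecting the reduced tree diagram of $x_0$ from Figure \ref{fig:x0x1} (domain-tree leaves $00,01,1$, range-tree leaves $0,10,11$, leaves matched in order) yields the tree diagram with domain-tree leaves $0,10,11$, range-tree leaves $00,01,1$ and order-matching, i.e.\ the branches $0\to 00$, $10\to 01$, $11\to 1$ — which is exactly the reduced tree diagram of $x_0^{-1}$. Hence also $\psi(x_0^{-1})=x_0$. Now for $g\in V$ with $\N(g)\ge 4$ we have $\N(\psi(g))=\N(g)\ge 4$ and $\psi(gx_0^{-1})=\psi(g)x_0$, so $\N(gx_0^{-1})=\N(\psi(g)x_0)$; applying Lemma \ref{x0} to $\psi(g)$ gives $\N(g)-1\le\N(gx_0^{-1})\le\N(g)+1$ at once. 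For the three refined clauses one just pushes the hypotheses and conclusions of Lemma \ref{x0} through $\psi$: the hypothesis $\ell_1(g)=1$ is the hypothesis $\ell_0(\psi(g))=1$, so part (1) follows from Lemma \ref{x0}(1) together with $\ell_1(gx_0^{-1})=\ell_0(\psi(gx_0^{-1}))=\ell_0(\psi(g)x_0)$; likewise $\ell_1(g)\ne 1$ is $\ell_0(\psi(g))\ne 1$, which gives part (2) with $\ell_1(gx_0^{-1})=\ell_0(\psi(g)x_0)=\ell_0(\psi(g))-1=\ell_1(g)-1$; and in part (3) the condition ``$0$ or $10$ is a strict prefix of some branch of $T_-(g)$'' is precisely the condition ``$1$ or $01$ is a strict prefix of some branch of $T_-(\psi(g))$'', so Lemma \ref{x0}(3) yields $\N(gx_0^{-1})=\N(g)$ and that $1$ is a strict prefix of some branch of $T_-(\psi(g)x_0)$, i.e.\ that $0$ is a strict prefix of some branch of $T_-(gx_0^{-1})$.

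The only genuine content is thus the setup in the first paragraph — verifying that reflection is a well-defined automorphism of $V$ and tracking its effect on $\N$, on $\ell_0/\ell_1$, and on prefixes of range-tree branches — together with the one-line identity $\psi(x_0)=x_0^{-1}$. I expect the main (mild) obstacle to be stating the effect of reflection on a tree diagram precisely, since it simultaneously reverses the left-to-right order of the leaves and complements all branch labels, so one must check that the induced permutation behaves consistently; once that is in place the lemma is automatic. Alternatively — and this is presumably what ``the proof is symmetric to that of Lemma \ref{x0}'' is meant to signal — one can repeat the proof of Lemma \ref{x0} verbatim, interchanging the digits $0$ and $1$ throughout, replacing $x_0$ by $x_0^{-1}$, $T_+(x_0)$ by $T_+(x_0^{-1})=T_-(x_0)$ and $T_-(x_0)$ by $T_-(x_0^{-1})=T_+(x_0)$; the dipole analysis is word-for-word the same, now forcing the pattern $t\equiv 0$ (rather than $t\equiv 1$) and a single reduction that does not affect the rightmost branch.
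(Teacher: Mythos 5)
Your proof is correct and matches the paper's approach: the paper disposes of this lemma in one line by declaring its proof ``symmetric'' to that of Lemma \ref{x0}, which is exactly the $0\leftrightarrow 1$ swap you describe in your closing paragraph. Your conjugation-by-bit-complement automorphism $\psi$ with $\psi(x_0)=x_0^{-1}$ is a clean, rigorous packaging of that symmetry, and the bookkeeping identities you list ($\N\circ\psi=\N$, $\ell_0\circ\psi=\ell_1\,$, complementation of branch prefixes) all check out.
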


\begin{Corollary}\label{x0invcor}
	Let $g\in V$ be such that $\mathcal N(g)\ge 4$. Let $\ell=\ell_1(g)$. Then for every $i\ge 0$ we have
	$$\N(gx_0^{-i})\ge \N(g)+i-2(\ell-1).$$
	Moreover, if either $0$ or $10$ is a strict prefix of some branch of $T_-(g)$ then
	$$\N(gx_0^{-i})= \max\{\N(g),\N(g)+i-(\ell-1)\}.$$
\end{Corollary}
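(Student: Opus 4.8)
The plan is to run the proof of Corollary \ref{x0cor} with all left/right roles interchanged. Concretely, I would replace $x_0$ by $x_0^{-1}$, $\ell_0$ by $\ell_1$, the left most branch by the right most branch, and invoke Lemma \ref{x0inv} in place of Lemma \ref{x0}; note in particular that the mirror of the prefix hypothesis in part (3) is ``$0$ or $10$ is a strict prefix of some branch of $T_-$'' rather than ``$1$ or $01$''. Write $\ell=\ell_1(g)$. The whole argument is then just repeated application of Lemma \ref{x0inv}, so the only thing to watch is that the hypotheses of the three parts of that lemma propagate correctly under repeated right multiplication by $x_0^{-1}$.

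First I would treat the base case $\ell=1$. By Lemma \ref{x0inv}(1), $\N(gx_0^{-1})=\N(g)+1$ and $\ell_1(gx_0^{-1})=1$, so the hypothesis $\ell_1=1$ is reproduced after one step; iterating gives $\N(gx_0^{-i})=\N(g)+i$ for every $i\ge0$. For $\ell>1$, Lemma \ref{x0inv}(2) gives $\ell_1(gx_0^{-1})=\ell-1$, so after $\ell-1$ steps we reach $\ell_1(gx_0^{-(\ell-1)})=1$, and along the way $\N(gx_0^{-i})\ge\N(g)-i\ge\N(g)-(\ell-1)\ge\N(g)+i-2(\ell-1)$ for $0\le i\le\ell-1$. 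Applying the base case to $gx_0^{-(\ell-1)}$ yields $\N(gx_0^{-(\ell-1)-j})=\N(gx_0^{-(\ell-1)})+j\ge\N(g)-(\ell-1)+j$ for $j\ge0$; the substitution $j=i-(\ell-1)$ then covers $i\ge\ell-1$. Combining the two ranges gives $\N(gx_0^{-i})\ge\N(g)+i-2(\ell-1)$ for all $i\ge0$.

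For the ``moreover'' statement, suppose $0$ or $10$ is a strict prefix of some branch of $T_-(g)$. Then Lemma \ref{x0inv}(2) and (3) give $\N(gx_0^{-i})=\N(g)$ and, crucially, that $0$ remains a strict prefix of some branch of $T_-(gx_0^{-i})$, for every $i\in\{0,\dots,\ell-1\}$; hence part (3) may be reapplied at each of these $\ell-1$ steps. In particular $\N(gx_0^{-(\ell-1)})=\N(g)$, so the base case applied to $gx_0^{-(\ell-1)}$ gives $\N(gx_0^{-(\ell-1)-j})=\N(g)+j$ for $j\ge0$, i.e. $\N(gx_0^{-i})=\N(g)+i-(\ell-1)$ for $i\ge\ell-1$. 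Together with the equalities for $i\le\ell-1$ this yields $\N(gx_0^{-i})=\max\{\N(g),\N(g)+i-(\ell-1)\}$.

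I expect the only mildly delicate point to be exactly this propagation bookkeeping --- that $\ell_1=1$ is preserved by part (1), that $\ell_1$ drops by exactly one under part (2), and that the ``$0$ is a strict prefix of some branch'' condition survives part (3) so that part (3) is still available at the next step. Each of these is the exact mirror of a fact established in the proof of Lemma \ref{x0} and used in Corollary \ref{x0cor}, so no genuinely new idea is needed; one only has to make sure the $0\leftrightarrow1$ flip is applied consistently throughout.
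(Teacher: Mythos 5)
Your proposal is correct and is exactly what the paper intends: the paper gives no separate argument for Corollary \ref{x0invcor}, stating only that its proof is symmetric to that of Corollary \ref{x0cor}, and your mirrored argument (including the check that Lemma \ref{x0inv}(1) preserves $\ell_1=1$, that part (2) decrements $\ell_1$, and that the ``$0$ is a strict prefix'' condition output by part (3) re-satisfies its hypothesis) is precisely that symmetric proof.
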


The next lemma describes the result of multiplying an element of $V$ on the right by an element from $F$ of a specific form.
Recall that the \emph{support} of an element $g\in V$ is, by definition,  the closure of the set $\{x\in[0,1):g(x)\neq x\}$ in the interval $[0,1]$. For a finite binary word $u$, we denote by $[u]$ the dyadic interval $[.u,.u1^{\mathbb N}]$. We denote by $F_{[u]}$ the subgroup of $F$ of all elements with support in the interval $[u]$. Let $g$ be an element of $F$ represented by a tree-diagram $(T_+,\mathrm{Id},T_-)$. We map $g$ to an element in $F_{[u]}$, denoted by $g_{[u]}$ and referred to as the \emph{copy of $g$ in $F_{[u]}$}. To construct the element $g_{[u]}$ we start with a minimal finite binary tree $T$ which contains the branch $u$. We take two copies of the tree $T$. To the first copy, we attach the tree $T_+$ at the end of the branch $u$. To the second copy we attach the tree $T_-$ at the end of the branch $u$. The resulting trees are denoted by $R_+$ and $R_-$, respectively. The element $g_{[u]}$ is the one represented by the tree-diagram $(R_+,\mathrm{Id}, R_-)$. Note that if $g$ consists of branches $v_i\to w_i, i=1,...,k,$ and $B$ is the set of branches of $T$ which are not equal to $u$, then $g_{[u]}$ consists of branches $uv_i\to uw_i, i=1,...,k$, and $p\to p, p\in B$.

For example, the copies of the generators $x_0,x_1$ of $F$ in $F_{[1]}$ are depicted in Figure \ref{fig:1x0x1}. Note that $(x_0)_{[1]}=x_1$. More generally, if $\{x_j, j\ge 0\}$ is the standard infinite generating set of $F$ where for $j\ge 1$, $x_{j+1}=x_0^{-j}x_1x_0^j$ (see \cite{CFP}), then for every $j\ge 0$ we have $x_{j+1}=(x_j)_{[1]}.$

\begin{figure}[ht]
	\centering
	\begin{subfigure}{.5\textwidth}
		\centering
		\includegraphics[width=.5\linewidth]{x1.png}
		\caption{The tree-diagram of $(x_0)_{[1]}$} 
		\label{fig:1x0}
	\end{subfigure}%
	\begin{subfigure}{.6\textwidth}
		\centering
		\includegraphics[width=.5\linewidth]{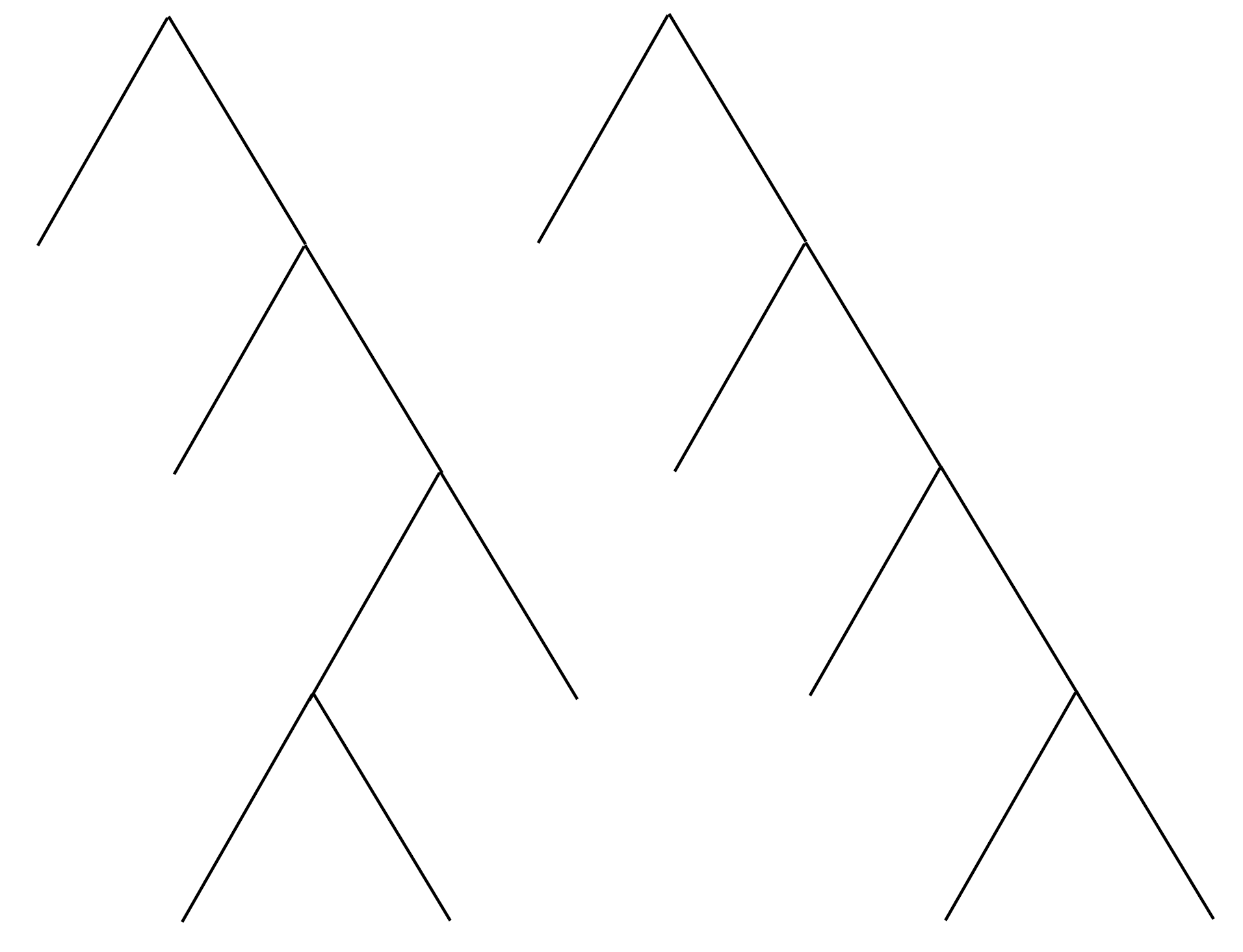}
		\caption{The tree-diagram of $(x_1)_{[1]}$}
		\label{fig:1x1}
	\end{subfigure}
	\caption{}
	\label{fig:1x0x1}
\end{figure}

\begin{Lemma}\label{bothsides}
	Let $g\in V$ be an element with reduced tree-diagram $(T_+,\sigma,T_-)$. Let $u\rightarrow v$ be a  branch of $g$ and let $h$ be an element of $F$. Let $h'=(h)_{[v]}$. Then
	$$\N(gh')=\N(g)+\N(h)-1.$$
	Moreover, if $h$ consists of branches $w_i\to z_i, i=1,...,k,$ and $B$ is the set of branches of $g$ which are not equal to $u\rightarrow v$, then $gh'$ consists of branches $uw_i\to vz_i, i=1,...,k$, along with all branches in $B$.
\end{Lemma}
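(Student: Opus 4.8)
The plan is to compute a (not necessarily reduced) tree-diagram for the product $gh'$ directly, identify its branches, and then argue that no dipoles arise, so that the diagram we construct is already reduced. First I would recall the construction of $h'=(h)_{[v]}$: starting from a minimal tree $T$ containing the branch $v$, one attaches $T_+(h)$ at the end of $v$ to form $R_+$ and $T_-(h)$ at the end of $v$ to form $R_-$; then $h'=(R_+,\mathrm{Id},R_-)$, and its branches are $vw_i\to vz_i$ (for $i=1,\dots,k$) together with $p\to p$ for every branch $p$ of $T$ with $p\ne v$. The key observation is that $R_+$ is obtained from $T_-(g)$ by a sequence of dipole insertions: indeed $T_-(g)$ contains the branch $v$ (since $u\to v$ is a branch of $g$), so after inserting enough carets below $v$ in $T_-(g)$ and making the matching insertions below $u$ in $T_+(g)$, we obtain an equivalent diagram $(T_+',\sigma',T_-')$ of $g$ with $T_-'=R_+$; here the branch $u\to v$ of $g$ has been expanded into the branches $uw_i\to vw_i$, while every other branch $p'\to q'$ of $g$ (with $p'\to q'\in B$, up to the common-suffix normalization) is unchanged.

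Next I would form the composite diagram: since $T_-'=R_+$, the product $gh'$ is represented by $(T_+',\sigma'\cdot\mathrm{Id},R_-)$, where we read off its branches by matching the branches of $(T_+',\sigma',T_-')$ with those of $(R_+,\mathrm{Id},R_-)$ through their common middle tree $R_+$. Concretely, a branch $uw_i\to vw_i$ of $g$ composes with the branch $vw_i\to vz_i$ of $h'$ to give $uw_i\to vz_i$; and a branch $p'\to q'$ of $g$ from $B$ composes with the identity branch $q'\to q'$ of $h'$ (note every branch of $R_-$ outside the subtree below $v$ equals the corresponding branch of $R_+$, and $q'$ is not below $v$ since $g$ is reduced and $u\to v$ is its branch, so $q'$ cannot have $v$ as a proper prefix unless... — this is the point to be careful about) to give $p'\to q'$ unchanged. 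Collecting these, $gh'$ has branches $\{uw_i\to vz_i\}_{i=1}^k\cup B$, and the leaf count is $(\mathcal N(g)-1)+\mathcal N(h)$, since we replaced the one branch $u\to v$ by the $k=\mathcal N(h)$ branches $uw_i\to vz_i$.

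Finally I would verify that this diagram is reduced, which gives $\mathcal N(gh')=\mathcal N(g)+\mathcal N(h)-1$ and the stated description of the branches. A dipole would be a pair of branches $a0\to b0$, $a1\to b1$. If both come from $B$, this contradicts reducedness of $g$. If both are among $\{uw_i\to vz_i\}$, say $uw_i=a0$, $uw_j=a1$, $vz_i=b0$, $vz_j=b1$: then the common prefix structure forces $w_i=w'0$, $w_j=w'1$ (so $a=uw'$) and $z_i=z'0$, $z_j=z'1$ (so $b=vz'$), i.e.\ $w'0\to z'0$ and $w'1\to z'1$ would be a dipole in the reduced diagram of $h$ — contradiction. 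The remaining case is a dipole with one branch from $B$ and one of the form $uw_i\to vz_i$; here I would argue that $a$ (the common parent on the domain side) is then a prefix of $u$, hence $u$ lies strictly below $a$, forcing $u$ to equal $a0$ or $a1$ with the sibling leaf $a1$ or $a0$ being a leaf of $T_+(g)$ in $B$; tracing through the range side then produces a dipole already present in the reduced diagram of $g$, again a contradiction. The main obstacle is exactly this last bookkeeping: one must handle carefully the common-suffix normalization mentioned in the preliminaries (a branch $u\to wv$ of a tree-diagram may, after reduction, have the form $u'\to w'$ with a suffix stripped), and make sure the words $w_i,z_i$ do not interact badly with the prefixes $u,v$ when $u$ or $v$ is short; but since $u\to v$ is a genuine branch of the \emph{reduced} diagram of $g$, no branch of $B$ can have $u$ as a prefix on the domain side nor $v$ as a prefix on the range side, which is what makes all the potential cross-dipoles impossible.
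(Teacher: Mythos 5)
Your proposal is correct and follows essentially the same route as the paper: pass to the common refinement of $T_-(g)$ and $T_+(h')$ (namely $T_-(g)$ with $T_+(h)$ attached below $v$), read off the branches of the product as $\{uw_i\to vz_i\}\cup B$, and rule out dipoles using reducedness of $g$ and of $h$ together with the fact that no branch in $B$ can properly extend the leaf $u$ on the domain side or $v$ on the range side. Two small slips --- the refined middle tree is $T_-(g)$ with $T_+(h)$ attached, which may properly contain $R_+$ rather than equal it, and in the mixed-dipole case $u$ is a prefix of the common parent $a$, not the other way around --- do not affect the substance of the argument.
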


\begin{proof}
	To multiply $g$ by $h'$ we note that the minimal refinement of $T_-(g)$ and $T_+(h')$ is the tree obtained from $T_-(g)$ by attaching the tree $T_+(h)$ at the bottom of the branch $v$. We denote by $S$ the described tree. Let $(R_1,\sigma',S)$ be an equivalent tree-diagram of $g$. Then $R_1$ is obtained from $T_+(g)$ by attaching a copy of $T_+(h)$ to the bottom of the branch $u$ (indeed, $u\rightarrow v$ is a branch of $g$). $\sigma'$ is obtained from $\sigma$ in the standard way. If $(S,\mathrm{Id},R_2)$ is a tree-diagram of $h'$ then $R_2$ can be obtained from $T_-(g)$ by attaching $T_-(h)$ at the bottom of the branch $v$. The product of the tree-diagrams $(R_1,\sigma',S)$ and $(S,\mathrm{Id},R_2)$ is $(R_1,\sigma',R_2)$. Note that this tree-diagram is obtained from $(T_+(g),\sigma,T_-(g))$ by attaching $T_+(h)$ to the bottom of the branch $u$ of $T_+(g)$, attaching $T_-(h)$ to the bottom of the branch $v$ of $T_-(g)$ and modifying the permutation $\sigma$ accordingly (so that the resulting branches are as described in the lemma). We note that this tree-diagram is reduced. Indeed, if $x$ and $y$ are consecutive leaves of the tree $R_1$, then either $x$ and $y$ are leaves of the subtree $T_+(g)$ of $R_1$, in which case they cannot form a dipole since $(T_+(g),\sigma,T_-(g))$ is reduced, or $x$ and $y$ are leaves of the copy of $T_+(h)$ and cannot form a dipole for similar reasons.

	The formula regarding the number of leaves of $gh'$ follows from the description of its reduced tree diagram. Indeed, the branches in that diagram are exactly those of $g$ other than the branch $u\rightarrow v$ which is replaced by $\N(h)$  branches.
\end{proof}

We provide a unified proof for the linear divergence of $F$, $T$ and $V$.
Below, $\mathcal G$ can stand for either one of these groups and $X$ denotes the standard generating set of $\mathcal G$.
If $\mathcal G=T$ then all permutations $\sigma$ in the tree diagrams below are cyclic permutations. If $\mathcal G=F$ then the permutations are all trivial.
For an element $g\in\mathcal G$ we denote by $|g|$ the word length $|g|_X$. If $w$ is a word over the alphabet $X$, we denote by $||w||$ the length of $w$. Note that any word $w$ over the alphabet $X$ can be viewed as an element of $\mathcal G$, such that $|w|\le ||w||$.

\begin{Proposition}\label{mainprop}
	There exist constants $\delta ,D>0$ and a positive integer $Q$ such that the following holds. Let $g\in\mathcal G$ be an element with $\N(g)\ge 4$. Then there is a path of length at most $D|g|$ in the Cayley graph $\Gamma=\Cay(\mathcal G,X)$ which avoids a $\delta|g|$-neighborhood of the identity and which has initial vertex $g$ and terminal vertex $x_0^{Q|g|}x_1^{-1}x_0^{-Q|g|+1}$.
	
	In other words, there is a word $w$ in the alphabet $X$ such that $||w||<D|g|$; for any prefix $w'$ of $w$, we have $|gw'|>\delta|g|$ and such that $$gw=x_0^{Q|g|}x_1^{-1}x_0^{-Q|g|+1}.$$
\end{Proposition}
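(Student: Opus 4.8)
The plan is to construct the word $w$ in several stages, at each stage moving $g$ to an element with a more and more controlled tree-diagram, while keeping $\N$ (equivalently, by Proposition \ref{Cc}, the word length) bounded below by a fixed fraction of $|g|$ throughout and bounded above by a fixed multiple of $|g|$. Set $N=\N(g)$; by Proposition \ref{Cc} we have $|g|\asymp N$, so it suffices to control $\N$ along the path. The target element $x_0^{Q|g|}x_1^{-1}x_0^{-Q|g|+1}$ is nothing but the copy $(x_0)_{[0^{Q|g|}1]}$ pushed far to the left, i.e. an element of $F$ supported on a tiny dyadic interval deep down the left spine; its tree-diagram is a long left comb of length about $Q|g|$ with a single $x_0$-pattern grafted at the bottom, so it has about $Q|g|$ leaves. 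The whole point is that Lemmas \ref{x0}, \ref{x0inv} and \ref{bothsides} together with Corollaries \ref{x0cor}, \ref{x0invcor} give us precise control of $\N$ under right-multiplication by $x_0^{\pm i}$ and by copies of $F$-elements.

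The stages I would use are roughly: (i) \emph{Prepare the left end of the range tree.} Multiply $g$ on the right by $x_0^{\pm i}$ for a bounded number of steps (bounded in terms of $\ell_0(g),\ell_1(g)$, hence automatically $\le N$) so that the resulting element $g_1$ has $1$ a strict prefix of some branch of its range tree and $\N(g_1)\asymp N$; here Lemma \ref{x0} parts (1)--(3) and Corollary \ref{x0cor} guarantee $\N$ never drops below $N-2(\ell-1)$, and one checks $\ell\le N$, so we can arrange the drop is at most a controlled amount — in fact if we first push until $\ell_0=1$ we only increase $\N$. (ii) \emph{Blow up the left spine.} Apply $x_0^{-1}$ once if needed to make $0^2$ a branch, then use the identity $x_{j+1}=(x_j)_{[1]}$ and Lemma \ref{bothsides} to multiply by suitable copies $(x_1)_{[0^a1]}$ etc.\ of generators along the branch heading down the left spine, forcing the range-tree (and hence, by reducedness as in Lemma \ref{bothsides}, the whole diagram) to contain a left comb $0^{Q N}$ as a rooted subtree while only adding $O(N)$ to the number of leaves and never decreasing $\N$. (iii) \emph{Kill the permutation and the part of the tree off the left spine.} Now $g_2$ has a huge left comb; multiply on the right by the copy $(g_2|_{\text{off-spine part}})^{-1}$ realized inside $F_{[\text{stuff}]}$ via Lemma \ref{bothsides}, which by the moreover-clause of that lemma again only changes $\N$ by a controlled amount and leaves a long left comb with only an $x_0$-pattern at the bottom — i.e. exactly the target up to choosing $Q$. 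Throughout, each right-multiplication by a single generator changes $\N$ by $O(1)$ by Lemma \ref{x0}/\ref{x0inv} and the analogous trivial bounds for $c_1,\pi_0$ (these change $\N$ by a bounded amount too), so the lower bound $|gw'|\ge \delta|g|$ along every prefix follows once we verify $\N$ never dips below, say, $\frac12 N$; and $\|w\|=O(N)=O(|g|)$ gives the upper bound.

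The key technical point is that all the "bad" steps — the ones where $\N$ can \emph{decrease} — happen only during stage (i), and Corollary \ref{x0cor}/\ref{x0invcor} bound the total decrease there by $2(\ell_0(g)-1)+2(\ell_1(g)-1)$, which is at most $2N$... too crude. So the real care is: one must \emph{not} naively collapse the spines, but instead interleave. A cleaner route: first do stage (ii)-type moves (which never decrease $\N$, since by Lemma \ref{x0}(1)/\ref{bothsides} grafting always adds leaves) to make the spines long, \emph{then} the spine-length $\ell$ is now $\asymp QN \gg N$, and one shows that the bounded initial adjustment in stage (i) costs only $O(1)$ because we can choose which leaf of a long spine to attach things to. Concretely I would: (a) if $\ell_0(g)=1$, apply $x_0$ once (increases $\N$, gives $\ell_0\ge 2$); (b) now $00$ is a branch, apply Lemma \ref{bothsides} with a copy of a long left comb attached at $0$-branch to make $\ell_0$ large without decreasing $\N$; (c) similarly handle the right end via $x_0^{-1}$ and Lemma \ref{x0inv}; (d) finally collapse everything off the left spine and collapse the right spine using one big application of Lemma \ref{bothsides} with an inverse element of $F$, landing on the target. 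At that last step $\N$ drops, but only to $\asymp QN$ which is still $\ge \frac12\cdot$ (max $\N$ attained) $\ge$ (after renormalizing $Q$) $\ge 10 N$, so we stay far from the identity.

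The main obstacle, as indicated, is \textbf{arranging that $\N$ stays above $\delta|g|$ at every single intermediate vertex}, including the vertices \emph{inside} the application of Lemma \ref{bothsides} when we multiply by a long word $(h)_{[v]}$ letter by letter: Lemma \ref{bothsides} controls the endpoint, but the prefixes of the word spelling out $(h)_{[v]}\in F$ pass through elements of $F_{[v]}$ times partial stuff, and one needs the crude bound $|g w'|\ge |g| - \|w'\|$ is \emph{not} good enough — instead one invokes that right-multiplying by an element supported in a small dyadic interval $[v]$ far down a long spine changes only the local picture, so $\N$ stays $\ge \N(g_{\text{current}}) - O(1)$ at each step, by the same reducedness argument as in Lemma \ref{bothsides}. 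Making this "the tree-diagram off the interval $[v]$ is unchanged, so $\N$ can only drop by the bounded local contribution" precise — uniformly over all the generators $x_0,x_1,c_1,\pi_0$ and over $\mathcal G\in\{F,T,V\}$ (where $c_1,\pi_0$ need their own one-line estimate that a single multiplication changes $\N$ by at most a constant) — is where essentially all the work goes. Once that is in hand, choosing $\delta$ smaller than the resulting lower-bound fraction, $D$ larger than the resulting word-length fraction, and $Q$ large enough that the final target has $\ge 4$ leaves and dominates, finishes the proof.
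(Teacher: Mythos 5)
Your overall strategy (normalize the tree diagram in stages, track $\N$ via Lemmas \ref{x0}--\ref{bothsides}, convert to distance from the identity via Proposition \ref{Cc}) matches the paper's in spirit, but the mechanism by which you propose to land on the target element does not work, and that is exactly where the paper's key idea lives. Your stage (iii) proposes to ``kill the permutation and the part of the tree off the left spine'' by right-multiplying by an inverse ``realized inside $F_{[\text{stuff}]}$ via Lemma \ref{bothsides}.'' But Lemma \ref{bothsides} only modifies the diagram \emph{below a single branch} $u\to v$: it replaces that one branch by a grafted copy of $h$ and leaves every other branch, and the permutation, untouched. The discrepancy between your $g_2$ and the target $x_0^{Q|g|}x_1^{-1}x_0^{-Q|g|+1}$ is global --- a different domain tree away from the spine and, for $T$ and $V$, a nontrivial permutation --- so it cannot be removed by any element supported in one dyadic interval. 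Moreover, for $V$ Proposition \ref{Cc}(3) gives only the lower bound $c\N(g)\le|g|_{\mathcal{C}}$, so you cannot bound the length of an arbitrary cancelling element of $V$ by $O(\N)$. The paper's route is different: it multiplies by an element $h=(T_-,\sigma',T_+)$ of $T$ (with $\sigma'$ cyclic, so the two-sided length estimate of Proposition \ref{Cc}(2) applies) merely to force $g_3$ to \emph{fix a left interval $[0^{\ell}]$ pointwise} with $\ell\le\N(g_1)$; then it multiplies by $w_4=x_0^{Q|g|}x_1^{-1}x_0^{-Q|g|+1}$ itself, which is supported in $[0^{Q|g|-1}]\subset[0^{\ell}]$ and hence \emph{commutes} with $g_3$; finally it cancels $g_3$ by a word $w_5$ with $||w_5||=|g_3|\le\tfrac32|g|$, which stays far from $1$ by the bare triangle inequality because $\N(g_3w_4)\ge Q|g|$ was first made enormous. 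That commutation trick ($g_3w_4g_3^{-1}=w_4$) is the missing idea; without it your final cancellation has no reason to avoid the ball of radius $\delta|g|$.

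There is a second gap in your prefix control. The heuristic ``right-multiplying by an element supported in a small dyadic interval changes only the local picture, so $\N$ drops by at most $O(1)$ per step'' fails for the prefixes that actually occur: the $F_{[1^{\ell_1}]}$-copy of a generator is spelled as a word like $x_0^{-M\N(g_1)}x_1x_0^{M\N(g_1)}$, whose prefixes $x_0^{-i}$ are not supported anywhere small, and each multiplication by $x_0^{-1}$ can decrease $\N$ by $1$ for up to $\ell_1(g)-1\le\N(g)-1$ consecutive steps. A per-letter $O(1)$ bound therefore permits $\N$ to collapse to a constant over such a block. The paper avoids this by first arranging (Subpath \ref{step1}) that $0$ is not a branch of the range tree, so that Corollary \ref{x0invcor} applies in its strong form and $\N$ is nondecreasing along those prefixes; an analogous preparation, verified separately for every long block of $x_0^{\pm1}$'s in your word, is needed before your estimates give anything.
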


\begin{proof}
	Let $C$ and $c$ be the constants from Proposition \ref{Cc}.	The word $w$ will be composed of $5$ subwords $w_1,\dots,w_5$ so that $w\equiv w_1\dots w_5$, that is we subdivide the path $w$ in the Cayley graph of $\mathcal G$ into 5 parts so that \begin{itemize} 
\item $g_1=gw_1$ does not have $0$ as a branch of $T_-(g_1)$, 
\item $g_2=g_1w_2$ satisfies $\N(g_2)>M\N(g_1)$ for large enough $M$,
\item $g_3=g_2w_3$ is such that $\ell_0(g_3)$ is not very large and  $0^{\ell_0(g_3)}\rightarrow 0^{\ell_0(g_3)}$ is a branch of $g_3$ (that is $g_3$ fixes pointwise a not too small interval $[0,\epsilon]$),
\item $g_4=g_3w_4$, the support of $w_4$ is in $[0^{\ell_0(g_3)}]$ so that $w_4$ commutes with $g_3$, $\N(g_4)$ is much bigger than $\N(g_3)$. Moreover $w_4\equiv x_0^{Q|g|}x_1^{-1}x_0^{-Q|g|+1}$ for a large enough $Q$.
\item $g_5=g_4w_5$ where $w_5=g_3\iv$. Then $g_5=g_4g_3^{-1}=w_4$ as desired. 
\end{itemize}	

We also make sure that none of the 5 subpaths of our path $w$ gets too close to the identity and that the length of each of them is bounded by a linear function in $|g|$.  
	\begin{Subword}\label{step1}
		If $0$ is not a branch of $T_-(g)$ we let $w_1\equiv \emptyset$ and let $g_1=g$.
		Otherwise, we let $w_1\equiv x_0^2x_1^{-1}x_0^{-1}$ and let $g_1=gw_1$.
	\end{Subword}
	\begin{Lemma}\label{lem1}
		$0$ is not a branch of $T_-(g_1)$. Moreover, $\N(g_1)\le\N(g)+2$ and for every prefix $w'$ of $w_1$ we have $\N(gw')\ge\N(g)$.
	\end{Lemma}
	
	\begin{proof}
		If $0$ is not a branch of $T_-(g)$ then the lemma is clear. Hence, we can assume that $0$ is a branch of $T_-(g)$.  Let $u$ be the corresponding branch of $T_+(g)$ so that $g$ has the branch $u\rightarrow 0$. We note that  $w_1=x_0^2x_1^{-1}x_0^{-1}$ is the $F_{[0]}$-copy of $x_0$.
		Hence, by Lemma \ref{bothsides}, for each branch $v_1\rightarrow v_2$ of $x_0$, we have that $uv_1\rightarrow 0v_2$ is a branch of $g_1=gw_1$. As such, $0$ is a strict prefix of some branch of $T_-(g_1)$ and is not a branch itself.
		
		For the second claim, we note that since $0$ is a branch of $T_-(g)$, we have $\ell_0(g)=1$. Hence, by Lemma \ref{x0}(1), $\N(gx_0)=\N(g)+1$ and $\ell_0(gx_0)=1$. Then another application of the lemma shows that $\N(gx_0^2)=\N(g)+2$. By Lemma \ref{bothsides}, $\N(g_1)=\N(gw_1)=\N(g)+2$ (recall that the number of leaves of $x_0$ is $3$). Since multiplying an element by $x_0$ can decrease the number of leaves of the element at most by $1$ (Lemma \ref{x0}), $\N(gx_0^2x_1)=\N(gw_1x_0)\ge\N(g)+1$.
	\end{proof}
	
	\begin{Subword}
		We fix an integer $M\ge100C/c$. Then we let $$w_2\equiv x_0^{-M\N(g_1)}x_1x_0^{M\N(g_1)}$$
		and $g_2=g_1w_2$.
	\end{Subword}
	
	\begin{Lemma}\label{lem2} The following assertions hold.
		\begin{enumerate}
			\item[(1)] For every prefix $w'$ of $w_2$, we have $\N(g_1w')\ge\N(g_1)$.
			\item[(2)] $\N(g_2)\ge M\N(g_1)$.
			
		\end{enumerate}
	\end{Lemma}
	
	\begin{proof}
		We start by proving part (2). First, we note that as an element of $\mathcal G$, we have
		$w_2=x_{m}$ where $m=1+M\N(g_1)$.
		Let $\ell_1=\ell_1(g_1)$ so that $1^{\ell_1}$ is a branch of $T_-(g_1)$ and there is some finite binary word $u$ such that $u\rightarrow 1^{\ell_1}$ is a branch of $g_1$. Clearly, $\ell_1<\N(g_1)$.
		We note that $x_m$ is the $F_{[1^{\ell_1}]}$ copy of $x_{m-\ell_1}$. Hence, by Lemma \ref{bothsides},  we have
		$$\N(g_2)=\N(g_1w_2)=\N(g_1x_m)=\N(g_1)+\N(x_{m-\ell_1})-1.$$
		Recall that for all $r$, $\N(x_r)=r+3$. Hence,
		$$\N(g_2)=\N(g_1)+m-\ell_1+2
		=\N(g_1)+M\N(g_1)+1-\ell_1+2\ge M\N(g_1),$$
		as $\ell_1<\N(g_1)$. Hence, part (2) holds.
		
		Before proving part (1), we note that if $z_j\rightarrow q_j$, $j=1,\dots,n$ are the branches of $x_{m-\ell_1}$ then by Lemma \ref{bothsides} the branches of $g_2=g_1x_m$ are $uz_j\rightarrow 1^{\ell_1}q_j$, $j=1,\dots,n$ as well as  all branches $a_k\rightarrow b_k$ which are branches of $g_1$, other than $u\rightarrow 1^{\ell_1}$.
		
		Now, let $w'$ be a prefix of $w_2$. Then either (a) $w'\equiv x_0^{-i}$ for some  $0\le i<m$, or (b) $w'\equiv x_0^{-m+1}x_1x_0^i$ for some $0\le i<m$, where $m$ is as above. We consider the element $g_1w'$ in each of these cases.
		
		By Lemma \ref{lem1}, $0$ is a strict prefix of some branch of $T_-(g_1)$. Hence, by Corollary \ref{x0invcor}, for any $i$,  $\N(g_1x_0^{-i})\ge\N(g_1)$. Hence, the lemma holds for subwords $w'$ of type (a). Now, consider the element $g_1w'$ for $w'$ of type (b). As an element of $\mathcal G$, we have $$g_1w'=g_1x_0^{-m+1}x_1x_0^i=g_1x_{m}x_0^{-m+1+i}.$$
		The proof of part (2) shows that $\N(g_1x_m)=\N(g_2)\ge\N(g_1)$. Since $i<m$, it suffices to show that multiplying $g_1x_m$ on the right by any negative power of $x_0$ cannot decrease the number of leaves. But, it is clear that in the range-tree of $g_1x_m$, $0$ is a strict prefix of some branch. Indeed, all branches of $T_-(g_1)$, other than $1^{\ell_1}$, are also branches of the range-tree of $g_1x_m$. Since $0$ is a strict prefix of some branch of $T_-(g_1)$, this is also true for the range-tree of $g_1x_m$. Hence, by Corollary \ref{x0invcor}, multiplying $g_1x_m$ on the right by any negative power of $x_0$ would result in a tree-diagram with at least as many leaves as in $g_1$.
	\end{proof}

	\begin{Subword}
		Let $(T_+,\sigma,T_-)$ be the reduced tree-diagram of $g_1$ and assume that $\sigma(1)=k$, where $1\le k\le\N(g_1)$.
		Let $h$ be the element of $T$ given by the tree-diagram $(T_-,\sigma',T_+)$, where $\sigma'$ is the cyclic permutation of $\{1, 2, \dots,\N(g_1)\}$ such that $\sigma'(k)=1$. We note that if $\mathcal G=F$ then $k=1$ and $h\in F$.
		Now, we let $w_3$ be the minimal word over $X$ such that $h=w_3$ in $\mathcal G$ and let $g_3=g_2w_3$. 	
	\end{Subword}

	\begin{Lemma}\label{lem3}
		The following assertions hold.
		\begin{enumerate}
			\item[(1)] $||w_3||\le C\N(g_1)$.
			\item[(2)] $\N(g_3)\ge (M-1)\N(g_1)$.
			\item[(3)] $\ell_0(g_3)\le\N(g_1)$ and  $0^{\ell_0(g_3)}\rightarrow 0^{\ell_0(g_3)}$ is a branch of $g_3$.
		\end{enumerate}
	\end{Lemma}
	
	\begin{proof}
		(1) The reduced tree-diagram of $h$ has at most $\N(g_1)$ leaves. Since $h\in F$ if $\mathcal G=F$ and $h\in T$ in the general case, by Proposition \ref{Cc}, the word length of $h$ over $X$ is at most $C\N(g_1)$.
		
		For parts (2) and (3) we recall the structure of $g_2$ as described in the proof of Lemma \ref{lem2}. Let $\ell_1=\ell_1(g_1)$ and let $m=1+M\N(g_1)$. Then $g_2=g_1x_m=g_1(x_{m-\ell_1})_{[1^{\ell_1}]}$.
		Now, let $u$ be such that $u\rightarrow 1^{\ell_1}$ is a branch of $g_1$. Then if $z_j\rightarrow q_j$, $j=1,\dots,n$ are the branches of $x_{m-\ell_1}$ then the branches of $g_2$ are $uz_j\rightarrow 1^{\ell_1}q_j$, $j=1,\dots,n$ as well as all the  branches $a_k\rightarrow b_k$ of $g_1$, other than $u\rightarrow 1^{\ell_1}$.
		
		Since $1^{\ell_1}$ is a branch of $T_-$, there is some branch $v$ of $T_+$ such that $1^{\ell_1}\rightarrow v$ is a branch of the tree-diagram $(T_-,\sigma',T_+)$ representing $h$. Hence, for every $j=1,\dots,n$, the product $g_2h$ takes the branch $uz_j$ onto $vq_j$. Indeed, $g_2$ takes the branch $uz_j$ onto $1^{\ell_1}q_j$ and $h$ takes the branch $1^{\ell_1}q_j$ onto $vq_j$. We claim that $uz_j\rightarrow vq_j$, $j=1,\dots,n$ are all branches of the reduced tree-diagram of $g_2h$. Indeed, they are all branches of some tree-diagram of $h$. Since $z_j\rightarrow q_j$ are the branches of the reduced tree-diagram of $x_{m-\ell_1}$, no reductions can occur in the carets formed by the branches $uz_j\rightarrow vq_j$ of the tree-diagram of $g_2h$.
		It follows that $$\N(g_3)=\N(g_2h)\ge\N(x_{m-\ell_1})=m-\ell_1+3=M\N(g_1)+1-\ell_1+3\ge (M-1)\N(g_1),$$ since $\ell_1=\ell_1(g_1)<\N(g_1)$. This proves part (2).
		
		For part (3), let $r$ be such that $0^r$ is the first branch of $T_+$ and note that $r<\N(g_1)$. Let $s$ be the $k^{th}$ branch of $T_-$, so that $0^r\rightarrow s$ is a branch of $g_1$ (recall that $\sigma(1)=k$). By definition of $\sigma'$, we have that $s\rightarrow 0^r$ is a branch of the tree-diagram $(T_-,\sigma',T_+)$ representing $h$. Recall that $u\rightarrow 1^{\ell_1}$ is a branch of $g_1$. We consider two cases: (a) $u\not\equiv 0^r$, and (b) $u\equiv 0^r$.
		
		In case (a), $0^r\rightarrow s$ is a branch of $g_2$. Indeed, every branch of $g_1$, other than $u\rightarrow 1^{\ell_1}$, is also a branch of $g_2$. Then since $g_2$ takes the branch $0^r$ to $s$ and $h$ takes the branch $s$ to $0^r$, the product $g_2h$ takes the branch $0^r$ to $0^r$. In particular, for some $r'\le r$, the element $g_3=g_2h$ has the branch $0^{r'}\rightarrow 0^{r'}$. Then $\ell_0(g_3)=r'<\N(g_1)$, as required.
		
		In case (b), $u\equiv 0^r$, so $s\equiv 1^{\ell_1}$. Since $(T_-,\sigma',T_+)$ has the branch $s\equiv 1^{\ell_1}\rightarrow v$ and $s\rightarrow 0^r$, we get that $v\equiv 0^r\equiv u$. 
		As noted above, $g_3$ has the branch $uz_j\equiv0^rz_j\rightarrow vq_j\equiv 0^rq_j$ for every branch $z_j\rightarrow q_j$ of $x_{m-\ell_1}$, and in particular, for the branch $z_1\equiv 0\rightarrow q_1\equiv 0$ of $x_{m-\ell_1}$. Hence, $0^{r+1}\rightarrow0^{r+1}$ is a branch of (the reduced tree-diagram of) $g_3$. Then $\ell_0(g_3)=r+1\le\N(g_1)$, as required.
	\end{proof}

	\begin{Subword}
		We fix an integer $Q\ge 10M/c^2$ and let
		$$w_4\equiv x_0^{Q|g|}x_1^{-1}x_0^{-Q|g|+1}.$$
		We also let $g_4=g_3w_4$.
	\end{Subword}
	
	\begin{Lemma}\label{lem4}
		The following assertions hold.
		\begin{enumerate}
			\item[(1)] For every prefix $w'$ of $w_4$ we have
			$$\N(g_3w')\ge \N(g_3)+\frac{1}{2}||w'||-2\N(g_1).$$
			\item[(2)] As elements in $\mathcal G$, $g_3$ and $w_4$ commute.
		\end{enumerate}
	\end{Lemma}
	
	\begin{proof}
		To prove part (1), let $\ell=\ell_0(g_3)$. By Lemma \ref{lem3}, $\ell\le \N(g_1)$. We first consider prefixes $w'$ of $w_4$ which are positive powers of $x_0$. For each prefix $w'\equiv x_0^i$ for $i\ge 0$ we have by Corollary \ref{x0cor},
		\begin{equation}\label{*}
		\begin{split}
				\N(g_3w')=\N(g_3x_0^i)&\ge \N(g_3)+i-2(\ell-1)\\
				&\ge\N(g_3)+i-2(\N(g_1)-1)\\
				&\ge\N(g_3)+||w'||-2(\N(g_1)-1).
		\end{split}
		\end{equation}
		Thus, to finish the proof of part (1) it suffices to show that for every prefix $w'$ of $w_4$ which contains the letter $x_1^{-1}$, we have
		\begin{equation}\label{**}
		\N(g_3w')\ge\N(g_3x_0^{Q|g|})-1.
		\end{equation}
		Indeed, in that case,  by inequality \ref{*},
		\begin{equation}
		\begin{split}
		\N(g_3w')&\ge \N(g_3)+Q|g|-2(\N(g_1)-1)-1\\
		&=N(g_3)+\frac{1}{2}||w_4||-2\N(g_1)+1\\
		&\ge N(g_3)+\frac{1}{2}||w'||-2\N(g_1),
		\end{split}
		\end{equation}
		as necessary.
		To prove that inequality \ref{**} holds 
		we first consider the prefix
		$$p\equiv x_0^{Q|g|}x_1^{-1}x_0^{-1}\equiv x_0^{Q|g|-2}\cdot x_0^2x_1x_0^{-1}.$$
		Note that by Lemmas \ref{lem3}, \ref{lem1} and Proposition \ref{Cc},
		\begin{equation}\label{Qineq}
		\ell=\ell_0(g_3)\le\N(g_1)\le\N(g)+2\le \frac{1}{c}|g|+2<Q|g|-2.
		\end{equation}
		Hence, by Lemma \ref{x0}(1,2), $\ell_0(g_3x_0^{Q|g|-2})=1$ so that $0$ is a branch of $T_-(g_3x_0^{Q|g|-2})$. Recall also that $x_0^2x_1^{-1}x_0^{-1}$ is the $F_{[0]}$-copy of $x_0$. Hence, by Lemma \ref{bothsides}, we have
		\begin{align*}
		\N(g_3p)&=\N(g_3x_0^{Q|g|-2}\cdot x_0^2x_1^{-1}x_0^{-1})\\
		&=\N(g_3x_0^{Q|g|-2})+\N(x_0)-1\\
		&=\N(g_3x_0^{Q|g|})-2+3-1\\
		&=\N(g_3x_0^{Q|g|}),
		\end{align*}
		so inequality \ref{**} holds for the prefix $p$. We note also that by Lemma \ref{bothsides}, the range-tree of $g_3p$ is obtained from that of $g_3x_0^{Q|g|-2}$ by attaching a copy of $T_-(x_0)$ to the bottom of the branch $0$. In particular, $0$ is a strict prefix of some branch of $T_-(g_3p)$.
		
		To finish the proof, it remains to prove that inequality \ref{**} holds for the prefix (a) $q\equiv x_0^{Q|g|}x_1^{-1}$ and prefixes of the form (b) $w'\equiv x_0^{Q|g|}x_1^{-1}x_0^{-i}$, for $i>1$.
		For the case (a), we note that $g_3q=g_3px_0$. Hence, by Lemma \ref{x0}(2),
		$$\N(g_3q)\ge\N(g_3p)-1\ge\N(g_3x_0^{Q|g|})-1,$$
		as required. Finally, prefixes of the form (b) can be written as $w'\equiv px_0^{-(i-1)}$. As noted above, $0$ is a strict prefix of some branch of $T_-(g_3p)$. Hence, by Corollary \ref{x0invcor} we have
		$$\N(g_3w')=\N(g_3px_0^{-(i-1)})\ge\N(g_3p)=\N(g_3x_0^{Q|g|}),$$
		as required.
		
		For part (2), we note that by Lemma \ref{lem3}, $g_3$ has the branch $0^\ell\rightarrow 0^\ell$. As such, it fixes the interval $[0^\ell)$ pointwise. Thus, to prove that $w_4$ commutes with $g_3$ it suffices to prove that the support of $w_4$ is contained in the interval $[0^{\ell}]$. Let $f=x_0^2x_1^{-1}x_0^{-1}$ and recall that $f$ is the $F_{[0]}$-copy of $x_0$. In particular, the support of $f$ is contained in the interval $[0]$. Since $x_0^{-1}$ has the branch $00\rightarrow 0$, any conjugate $f^{x_0^{-i}}$ for $i\ge 0$ has support in the interval $[0^{1+i}]$. In particular, $w_4=f^{x_0^{-(Q|g|-2)}}$ has support in the interval $[0^{Q|g|-1}]$. By inequality \ref{Qineq}, $\ell<Q|g|-1$. Hence, $w_4$ has support in the interval $[0^\ell]$, as required.

	\end{proof}
	
	\begin{Subword}
		Let $w_5$ be a minimal word in the alphabet $X$ such that $w_5=g_3^{-1}$ in $\mathcal G$. Let $g_5=g_4w_5$.
	\end{Subword}
	
	It follows from Lemma \ref{lem4}(2) that
	$$g_5=g_3w_4g_3^{-1}=w_4=x_0^{Q|g|}x_1^{-1}x_0^{-Q|g|+1}.$$
	Hence, for $w\equiv w_1w_2w_3w_4w_5$ we have that $gw=x_0^{Q|g|}x_1^{-1}x_0^{-Q|g|+1}$, as required.
	
	It remains to prove that one can choose constants $\delta,D$ (independently of $g$), so that the path $w$ satisfies the conditions in the proposition.
	For that, we note that $||w_5||=|g_3|\le|g|+||w_1w_2w_3||$. Upper bounds for the lengths of $w_1,w_2,w_3,w_4$ follow from their definition. Together we have the following.
	\begin{align*}
	||w||&\le 2||w_1w_2w_3||+||w_4||+|g|\\
	&\le 2(4+2M\N(g_1)+1+C\N(g_1))+2Q|g|+|g|\\
	&\le 8M\N(g)+3Q|g|\\
	&\le 8M/c|g|+3Q|g|.
	\end{align*}
	Where the third inequality follows from $\mathcal N(g_1)\le \mathcal N(g)+2$ and from the definition of $M$ and $Q$. The last inequality follows from Proposition \ref{Cc}, since $\mathcal N(g)\le\frac{1}{c}|g|$. Hence, fixing $D=8M/c+3Q$ ensures that $||w||\le D|g|$, as required. Moreover, we note that, as in the above calculation, $||w_1w_2w_3||\le 4M\mathcal N(g)$.
	Now, let $\delta=\frac{c}{8M}$. The following  lemma completes the proof of the proposition.
	
	\begin{Lemma}
		Let $w'$ be a prefix of $w$. Then $|gw'|\ge \delta|g|.$
	\end{Lemma}
	\begin{proof}
		We separate the proof into two cases depending on the length of $g$.
		
		Case (1): $|g|<8M\N(g)$.
		
		It follows from Lemmas \ref{lem1} and \ref{lem2} that for every prefix $w'\le w_1w_2$, we have $\mathcal N(gw')\ge\mathcal N(g)$.
		Then, applying Proposition \ref{Cc}, we have that
		$$|gw'|\ge c\mathcal N(gw')\ge c\mathcal N(g)\ge \frac{c}{8M}|g|\ge \delta|g|,$$
		as required. Hence, it suffices to prove that for every prefix $w'\le w_3w_4w_5$ we have, $|g_2w'|\ge\delta|g|$.
		Recall that by Lemma \ref{lem2}, $\mathcal N(g_2)\ge M\N(g_1)$. Hence, by Proposition \ref{Cc}, $|g_2|\ge cM\N(g_1)\ge100C\N(g_1)$. Since $||w_3||\le C\mathcal N(g_1)$ (Lemma \ref{lem3}) and $\N(g_1)\ge\N(g)$ (Lemma \ref{lem1}), for any $w'\le w_3$ we have
		$$|g_2w'|\ge|g_2|-||w'||\ge|g_2|-||w_3||\ge100C\mathcal N(g_1)-C\mathcal N(g_1)\ge99C\N(g)\ge\frac{99C}{8M}|g|\ge\delta|g|,$$
		as required. Next, we consider elements of the form $g_3w'$ where $w'\le w_4$. Recall that by Lemma \ref{lem3}, $\mathcal N(g_3)\ge (M-1)\N(g_1)$. Let $w'\le w_4$. By Lemma \ref{lem4}(1), we have
		$$\N(g_3w')\ge\N(g_3)-2\N(g_1)\ge(M-3)\N(g_1)\ge(M-3)\N(g)\ge\frac{M-3}{8M}|g|\ge\delta|g|.$$
		We also note that by Lemma \ref{lem4}(1), $\mathcal N(g_4)=\mathcal N(gw_4)\ge\frac{1}{2}||w_4||=Q|g|$. Since $$||w_5||=|g_3|\le|g|+||w_1w_2w_3||\le|g|+4M\N(g)\le |g|+\frac{4M}{c}|g|\le\frac{5M}{c}|g|\le\frac{1}{2}cQ|g|,$$
		for any prefix $w'\le w_5$, we have
		$$|g_4w'|\ge|g_4|-||w'||\ge|g_4|-||w_5||\ge c\N(g_4)-||w_5||\ge cQ|g|-\frac{1}{2}cQ|g|\ge\frac{1}{2}cQ|g|\ge\delta|g|,$$
		as required. Hence, the lemma holds in case (1).

		Case (2): $|g|\ge 8M\N(g)$.
		
		As noted above, $||w_1w_2w_3||\le 4M\N(g)$. Hence, for any $w'\le w_1w_2w_3$ we have
		$$|gw'|\ge|g|-||w'||\ge|g|-||w_1w_2w_3||\ge|g|-4M\N(g)\ge\frac{1}{2}|g|\ge\delta|g|,$$
		as required. Hence, it suffices to consider words of the form $g_3w'$ where $w'\le w_4$ and $g_4w'$ where $w'\le w_5$.
		Let $w'\le w_4$. If $||w'||\le\frac{1}{4}|g|$, then since $|g_3|\ge\frac{1}{2}|g|$, we have that $|g_3w'|\ge\frac{1}{4}|g|$, as necessary. Hence, we can assume that $||w'||>\frac{1}{4}|g|$. In that case, by Lemma \ref{lem4}(1) we have
		$$\mathcal N(g_3w')\ge\frac{1}{2}||w'||\ge\frac{1}{8}|g|.$$
		Hence, by Proposition \ref{Cc}, $|g_3w'|\ge \frac{1}{8}c|g|\ge\delta|g|$ as required.
		To finish, we note that by Lemma \ref{lem4}(1), $$\N(g_4)=\N(g_3w_4)\ge\frac{1}{2}||w_4||=Q|g|.$$
		Hence, $|g_4|\ge cQ|g|$. Since $$||w_5||=|g_3|\le |g|+||w_1w_2w_3||\le|g|+4M\N(g)\le\frac{3}{2}|g|,$$ we have that for any $w'\le w_5$,
		$$|g_4w'|\ge|g_4|-||w_5||\ge cQ|g|-\frac{3}{2}|g|\ge\delta|g|,$$
		as necessary.
		
	\end{proof}

	
\end{proof}

\begin{Theorem}
	There exist constants $\delta ,D>0$ such that the following holds. Let $g_1,g_2\in\mathcal G$ be two elements with $\N(g_1),\N(g_2)\ge 4$. Then there is a path of length at most $D(|g_1|+|g_2|)$ in the Cayley graph $\Gamma=\Cay(\mathcal G,X)$ which avoids a $\delta\min\{|g_1|,|g_2|\}$-neighborhood of the identity and which has initial vertex $g_1$ and terminal vertex $g_2$.
	
\end{Theorem}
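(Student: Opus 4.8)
The plan is to reduce the statement to Proposition~\ref{mainprop} and then to join, by an explicit path, the two ``target'' elements it produces. Write $h_N:=x_0^N x_1^{-1}x_0^{-N+1}\in F\subseteq\mathcal G$, so that Proposition~\ref{mainprop} sends $g$ to $h_{Q|g|}$, and let $\delta_0,D_0,Q$ be the constants of Proposition~\ref{mainprop} and $c,C$ those of Proposition~\ref{Cc}. Applying Proposition~\ref{mainprop} to $g_1$ and to $g_2$ gives, for $i=1,2$, a path $P_i$ from $g_i$ to $h_{Q|g_i|}$ of length at most $D_0|g_i|$, all of whose vertices lie at distance $>\delta_0|g_i|\ge\delta_0\min\{|g_1|,|g_2|\}$ from $1$. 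Reading $P_2$ backwards gives a path from $h_{Q|g_2|}$ to $g_2$ with the same length and distance bounds. So it suffices to join $h_{Q|g_1|}$ to $h_{Q|g_2|}$ by a path of length $O(|g_1|+|g_2|)$ whose vertices all lie at distance $\gtrsim\min\{|g_1|,|g_2|\}$ from $1$; concatenating this with $P_1$ and the reverse of $P_2$, and taking $\delta,D$ to be suitable combinations of the constants above, then finishes the proof. (Note all intermediate elements below lie in $F\subseteq\mathcal G$, so Proposition~\ref{Cc} applies to them regardless of which of $F,T,V$ is $\mathcal G$.)

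To join the targets I may assume $|g_1|\le|g_2|$ (the conclusion is symmetric in $g_1,g_2$, and paths are reversible), and I set $n:=Q|g_1|\le m:=Q|g_2|$; both are integers with $n\ge Q\ge 4$ (if $n=m$ the targets coincide and this step is vacuous). Starting from $h_n$, I multiply on the right, letter by letter, by the word $x_0^{\,n-1}\,x_1\,x_0^{\,m-n}\,x_1^{-1}\,x_0^{\,-m+1}$. The resulting path visits, in order,
$$h_n\;\longrightarrow\;x_0^n x_1^{-1}\;\longrightarrow\;x_0^n\;\longrightarrow\;x_0^m\;\longrightarrow\;x_0^m x_1^{-1}\;\longrightarrow\;h_m,$$
where the first and last segments consist of $n-1$ and $m-1$ steps of right multiplication by $x_0$ and by $x_0^{-1}$ respectively, the middle segment of $m-n$ steps of right multiplication by $x_0$, and the two remaining transitions are single edges; the total length is $2m\le 2Q(|g_1|+|g_2|)$.

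What remains is to check that every vertex of this path has $\mathcal N\ge n+2$: by Proposition~\ref{Cc} it will then lie at distance $\ge c(n+2)\ge cn=cQ\min\{|g_1|,|g_2|\}$ from $1$, as required. The one genuine computation is that, for $k\ge 2$, the reduced tree-diagram of $x_0^k x_1^{-1}$ has exactly $k+2$ leaves and has $100$ as a branch of its range-tree (equivalently, $10$ is a strict prefix of one of its branches). Granting this: $\mathcal N(x_0^k)=k+2\ge n+2$ for $n\le k\le m$, and $\mathcal N(x_0^k x_1^{-1})=k+2\ge n+2$ for $k\in\{n,m\}$, which covers the middle segment and the two single edges; the vertices of the first segment are $x_0^n x_1^{-1}x_0^{-j}$ for $0\le j\le n-1$, and since $10$ is a strict prefix of a branch of $T_-(x_0^n x_1^{-1})$, Corollary~\ref{x0invcor} gives $\mathcal N(x_0^n x_1^{-1}x_0^{-j})\ge\mathcal N(x_0^n x_1^{-1})=n+2$; the same corollary applied to the last segment, whose vertices are $x_0^m x_1^{-1}x_0^{-j}$ for $0\le j\le m-1$, gives $\mathcal N\ge m+2\ge n+2$ there. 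This proves the claim, and one takes $D:=D_0+2Q$ and $\delta:=\min\{\delta_0,cQ\}$.

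The main obstacle is exactly this last step. The concatenation in the first paragraph and the length bookkeeping are routine, but the distance estimate for the middle path requires care, because a priori right multiplication by $x_0^{-1}$ can decrease $\mathcal N$ substantially (indeed $\ell_1(x_0^k x_1^{-1})$ is large). One must therefore pin down the reduced diagram of $x_0^k x_1^{-1}$ precisely enough to invoke the ``strict prefix'' hypothesis of Corollary~\ref{x0invcor} (via the branch $100$ of its range-tree), which is what prevents $\mathcal N$ from falling below $n+2$ along the two comb-like segments (from $h_n$ to $x_0^n x_1^{-1}$, and from $x_0^m x_1^{-1}$ to $h_m$); crude lower bounds $\mathcal N(x_0^k x_1^{-1})\gtrsim k$ would in fact suffice, only worsening the constant $\delta$.
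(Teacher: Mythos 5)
Your proposal is correct and follows essentially the same route as the paper: apply Proposition~\ref{mainprop} to each of $g_1,g_2$, reverse one of the resulting paths, and join the two targets $x_0^{Q|g_i|}x_1^{-1}x_0^{-Q|g_i|+1}$ by exactly the connecting word $x_0^{Q|g_1|-1}x_1x_0^{Q(|g_2|-|g_1|)}x_1^{-1}x_0^{-Q|g_2|+1}$, with the same choices $\delta=\min\{\delta_0,cQ\}$ and $D=D_0+2Q$. The only (harmless) divergence is local: where the paper bounds $\mathcal N$ along the connecting path by citing the normal form and \cite[Theorem 3]{BCS}, you verify directly that the reduced diagram of $x_0^kx_1^{-1}$ has $k+2$ leaves with $100$ a branch of its range-tree and then invoke Corollary~\ref{x0invcor} --- a computation that checks out and makes this step self-contained.
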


\begin{proof}
	Let $\delta',D'$ and $Q$ be constants satisfying the conditions of  Proposition \ref{mainprop}. We set $\delta=\min\{\delta',cQ\}$ and let $D=D'+2Q$.
	By Proposition \ref{mainprop}, there is a path $w_1$ from $g_1$ to
	$x_0^{Q|g_1|}x_1^{-1}x_0^{-Q|g_1|+1}$ such that $||w_1||<D'|g|$ and such that the path avoids a $\delta'|g_1|$-neighborhood of the identity.
	Similarly, there is a path $w_2$ from $g_2$ to $x_0^{Q|g_2|}x_1^{-1}x_0^{-Q|g_2|+1}$ such that $||w_2||<D'|g_2|$ and such that the path avoids a $\delta'|g_2|$-neighborhood of the identity.
	Assume without loss of generality that $|g_1|\le |g_2|$ and let
	$$p\equiv x_0^{Q|g_1|-1}x_1x_0^{Q(|g_2|-|g_1|)}x_1^{-1}x_0^{-Q|g_2|+1},$$
	so that $p$ labels a path from $x_0^{Q|g_1|}x_1^{-1}x_0^{-Q|g_1|+1}$ to $x_0^{Q|g_2|}x_1^{-1}x_0^{-Q|g_2|+1}$.
	We let $w\equiv w_1pw_2^{-1}$. Note that $$||w||=||w_1||+||w_2||+||p||= ||w_1||+||w_2||+2Q|g_2|\le D'(|g_1|+|g_2|)+2Q|g_2|\le D(|g_1|+|g_2|).$$
	It suffices to prove that for every prefix $w'\le p$, we have
	$$|x_0^{Q|g_1|}x_1^{-1}x_0^{-Q|g_1|+1}w'|\ge\delta\min\{|g_1|,|g_2|\}.$$
	But, it is easy to see that for any prefix $w'$ of $p$, the positive part of the normal form (see \cite{CFP}) of the element $x_0^{Q|g_1|}x_1^{-1}x_0^{-Q|g_1|+1}w'\in F$ is $x_0^i$ for $i\ge Q|g_1|$. Hence, by \cite[Theorem 3]{BCS}, we have
	$$\mathcal N(x_0^{Q|g_1|}x_1^{-1}x_0^{-Q|g_1|+1}w')>Q|g_1|.$$ Hence, by Proposition \ref{Cc}, $$|x_0^{Q|g_1|}x_1^{-1}x_0^{-Q|g_1|+1}w'|\ge cQ|g_1|>\delta\min\{|g_1|,|g_2|\},$$ as $\delta<cQ$.
\end{proof}

\begin{Corollary}
	Thompson group $\mathcal G$ has linear divergence.
\end{Corollary}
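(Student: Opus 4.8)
The plan is to derive the Corollary from the Theorem with essentially no extra work, using two elementary observations. First, the set $\{g\in\mathcal G:\N(g)\le 3\}$ is finite: for each $k\in\{1,2,3\}$ there are only finitely many full rooted binary trees with $k$ leaves, and in the case of $T$ or $V$ only finitely many admissible permutations of such a leaf set, so there are only finitely many reduced tree diagrams with at most $3$ leaves. Hence there is a constant $R=R(\mathcal G)$ with $|g|\le R$ for every such $g$; equivalently, $|g|>R$ forces $\N(g)\ge 4$. Second, as recalled in the introduction (following \cite{DMS}), replacing the constant $\tfrac14$ in the definition of the divergence function by any smaller positive constant gives an equivalent function, so it suffices to bound above the variant of the divergence function in which $\tfrac14$ is replaced by the constant $\delta$ furnished by the Theorem (we may assume $\delta\le\tfrac14$).

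Next I would run the short computation. Fix $\delta,D>0$ as in the Theorem and let $n>R$. If $g_1,g_2$ are vertices of $\Gamma=\Cay(\mathcal G,X)$ with $|g_1|=|g_2|=n$, then $\N(g_1),\N(g_2)\ge 4$, so the Theorem provides a path in $\Gamma$ from $g_1$ to $g_2$ of length at most $D(|g_1|+|g_2|)=2Dn$ that avoids the $\delta n$-neighborhood of the identity. Thus the $\delta$-variant of the divergence function is at most $2Dn$ for all $n>R$. For the finitely many remaining values $n\le R$ the corresponding quantity is finite: the ball $B(1,\lfloor\delta n\rfloor)$ in $\Gamma$ is finite and $\mathcal G$ is one-ended, so any two vertices lying outside it can be joined by a path avoiding it. Enlarging the multiplicative constant to absorb these finitely many values, the $\delta$-variant is bounded by a linear function of $n$ for all $n\ge1$. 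Since modifying a function on a finite initial segment does not change its equivalence class, the divergence function of $\mathcal G$ is at most linear, and being always at least linear, it is linear; that is, $\mathcal G$ has linear divergence.

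I do not expect any genuine obstacle: the Theorem already contains all the content, and the only step that is cited rather than argued directly is the insensitivity of the divergence equivalence class to the choice of the constant $\tfrac14$, which is precisely the remark from \cite{DMS} quoted in the introduction. The remaining points — isolating the finitely many elements with $\N(g)\le 3$ (equivalently, of bounded word length) and folding the behaviour at small $n$ into additive and multiplicative constants — are routine bookkeeping.
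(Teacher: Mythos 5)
Your deduction is correct and is exactly the argument the paper intends: the Corollary is stated with no proof at all, as an immediate consequence of the Theorem, and your bookkeeping (finiteness of $\{g:\N(g)\le 3\}$, the $\delta$ versus $\tfrac14$ equivalence from \cite{DMS}, and absorbing the finitely many small values of $n$) supplies precisely the routine details the authors left implicit. No gaps.
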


\end{document}